\newcolumntype{C}{>{\centering\arraybackslash}X}
\newcolumntype{D}{>{\centering\arraybackslash}X}
\newtheorem{theorem}{Theorem}
\newtheorem{lemma}[theorem]{Lemma}
\newtheorem{observation}[theorem]{Observation}
\newtheorem{proposition}[theorem]{Proposition}
\newtheorem{corollary}[theorem]{Corollary}
\newtheorem*{claim*}{Claim}
\theoremstyle{remark}
\newcommand{\subsetneql}{\ensuremath{\subset}}
\newcommand{\cO}{\ensuremath{\mathcal{O}}}
\newcommand{\cT}{\ensuremath{\mathcal{T}}}
\newcommand{\N}{\ensuremath{\mathbb{N}}}
\newcommand{\cM}{\ensuremath{\mathcal{M}}}
\newcommand{\cX}{\ensuremath{\mathcal{X}}}
\newcommand{\cY}{\ensuremath{\mathcal{Y}}}
\newcommand{\cZ}{\ensuremath{\mathcal{Z}}}
\newcommand{\EEE}{\ensuremath{\mathbb{E}}}
\newcommand{\TT}{\ensuremath{\mathcal{T}}}
\newcommand{\PP}{\ensuremath{\mathcal{P}}}
\newcommand{\QQ}{\ensuremath{\mathcal{Q}}}
\newcommand{\PPP}{\ensuremath{\mathbb{P}}}
\begin{document}

\title{Poset Ramsey numbers: large Boolean lattice versus a fixed poset}

\author{Maria Axenovich\thanks{Karlsruhe Institute of Technology, Karlsruhe, Germany}\and Christian Winter\thanks{Karlsruhe Institute of Technology, E-mail: \textit{christian.winter@kit.edu}}}

\maketitle

\begin{abstract}

Given partially ordered sets (posets) $(P, \leq_P)$ and $(P', \leq_{P'})$, we say that $P'$ contains a copy of $P$ if for some injective function $f: P\rightarrow P'$  and for any $X, Y\in P$, $X\leq _P Y$ if and only of $f(X)\leq_{P'} f(Y)$. 
For any posets $P$ and $Q$, the poset Ramsey number $R(P,Q)$ is the least positive integer $N$ such that no matter how the elements of an $N$-dimensional Boolean lattice are colored in blue and red, there is either a copy of  $P$ with all blue elements or a copy of   $Q$ with all red elements. We focus on a poset Ramsey number $R(P, Q_n)$ for a fixed poset $P$ and an $n$-dimensional Boolean lattice $Q_n$, as $n$ grows large.
We show a sharp jump in behaviour of this number as a function of $n$ depending on whether or not $P$ contains a copy of either a poset $V$, 
i.e.\ a poset on elements $A, B, C$ such that $B>C$, $A>C$, and $A$ and $B$ incomparable, or a poset $\Lambda$, its symmetric counterpart. 
Specifically, we prove that if $P$ contains a copy of $V$ or $\Lambda$ then $R(P, Q_n) \geq n +\frac{1}{15} \frac{n}{\log n}$. Otherwise $R(P, Q_n) \leq n + c(P)$ for a constant $c(P)$. 
This gives the first non-marginal improvement of a lower bound on poset Ramsey numbers and as a consequence gives $R(Q_2, Q_n) = n + \Theta (\frac{n}{\log n})$.

\end{abstract}

\section{Introduction}

A partially ordered set, shortly a \textit{poset}, is a 
set $P$ equipped with a relation $\le_P$ that is transitive, reflexive, and antisymmetric. 
For any  non-empty set $\cX$, let $\QQ(\cX)$ be the \textit{Boolean lattice} of dimension $|\cX|$ on a \textit{ground set} $\cX$, i.e.\ 
the poset consisting of all subsets of $\cX$ equipped with the inclusion relation, $\subseteq$. 
We use $Q_n$ to denote a Boolean lattice with an arbitrary $n$-element ground set. 
We refer to a poset either as a pair $(P, \leq_P)$, or, when it is clear from context, simply as a set $P$. The elements of $P$ are often called \textit{vertices}.\\

For two posets $(P_1, \leq_{P_1})$ and $(P_2, \leq_{P_2})$,  an \textit{embedding} $\phi\colon P_1\to P_2$ of $P_1$ into $P_2$ is an injective function such that for every $X_1,X_2\in P_1$,
$$X_1 \leq_{P_1}  X_2\text{ if and only if }\phi(X_1)\leq_{P_2} \phi(X_2).$$
A poset $P_1$ is an  \textit{induced subposet} of $P_2$
if $P_1\subseteq P_2$ and for every $X_1,X_2\in P_1$, $X_1 \leq_{P_1}  X_2$ if and only if $X_1 \leq_{P_2} X_2.$
An  \textit{copy} of a poset $P_1$ in $P_2$ is an induced subposet $P'$ of $P_2$, isomorphic to $P_1$.\\

Extremal properties of posets and their induced subposets have been investigated in recent years and mirror similar concepts in graphs.
Carroll and Katona \cite{CK} initiated the consideration of so called Tur\'an-type problems for induced subposets. 
Most notable is a result by Methuku and P\'alvölgyi \cite{MP} which provides an asymptotically tight bound on the maximum size of a subposet of a Boolean lattice that does not have a copy of a fixed poset $P$, for general $P$. 
Their statement has been refined for several special cases, see e.g.\ Lu and Milans \cite{LM} and M\'eroueh \cite{M}.
Further Tur\'an-type results are, for example, given by Methuku and Tompkins \cite{MT} and Tomon \cite{T}. 
Note that Tur\'an-type properties are also investigated in depth for non-induced, so called \textit{weak} subposets, which are not considered here.
Besides that, saturation-type extremal problems are studied for induced and weak subposets, see a recent survey of Keszegh, et al.\ \cite{KLMPP}.
\\

In this paper we are dealing with Ramsey-type properties of induced subposets in Boolean lattices. 
Consider an assignment of two colors, blue and red, to the vertices of posets. Such a coloring $c: P \rightarrow \{blue, red\}$ is a \textit{blue/red coloring} of $P$.
A colored poset is \textit{monochromatic} if all of its vertices share the same color. A monochromatic poset whose vertices are blue is called a \textit {blue poset}. Similarly defined is a \textit{red poset}.
Extending the classical definition of graph Ramsey numbers, Axenovich and Walzer \cite{AW} introduced the \textit{poset Ramsey number} which is defined as follows. For posets $P$ and $Q$, let 

\begin{multline*}
R(P,Q)=\min\{N\in\N \colon \text{ every blue/red coloring of $Q_N$ contains either}\\ 
\text{a blue copy of $P$ or a red copy of $Q$}\}.
\end{multline*}

One of the central questions in this area is to determine $R(Q_n, Q_n)$. The best bounds currently known are 
$2n+1 \leq R(Q_n, Q_n) \leq n^2 -n+2$, see listed chronologically Walzer \cite{W},  Axenovich and Walzer \cite{AW},  Cox and Stolee \cite{CS},   Lu and Thompson \cite{LT},  Bohman and Peng \cite{BP}. 
For off-diagonal setting $R(Q_k, Q_n)$ with $k$ fixed and $n$ large, an exact result is only known if $k=1$. It is easy to see that $R(Q_1, Q_n)=n+1$.
For $k=2$, it was shown in \cite{AW} that $R(Q_2, Q_n) \leq 2n+2$. This was improved by Lu and Thompson to $R(Q_2, Q_n)\leq (5/3)n +2$. 
Finally, it was further improved by Gr\'osz, Methuku, and Tompkins \cite{GMT}:
\begin{theorem}[Gr\'osz et al. \cite{GMT}]\label{thm_grosz}
Let $\epsilon>0$ and let $n\in\N$ be sufficiently large. Then
$$n+3 \leq R(Q_2,Q_n) \le n + \frac{(2+\epsilon)n}{\log n}.$$
\end{theorem}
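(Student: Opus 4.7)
To prove $R(Q_2,Q_n)\ge n+3$ I would exhibit a blue/red coloring of $Q_{n+2}$ without a blue $Q_2$ or a red $Q_n$. The natural candidates are colorings in which the blue set is itself $Q_2$-free while blocking every potential $Q_n$-copy on a small number of critical ``gadget'' positions --- for instance, making blue a short chain together with a $V$- or $\Lambda$-shaped attachment at the top and bottom levels of $Q_{n+2}$. Canonical copies of $Q_n$ in $Q_{n+2}$ are easy to hit in this way, and the main obstacle is to also block non-canonical copies; for this I would use the fact that any embedding of $Q_n$ into $Q_{n+2}$ is essentially canonical up to a bounded number of exceptional positions, reducing the verification to a small finite case analysis.

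\textbf{Upper bound: averaging setup.} Let $k=\lceil(2+\epsilon)n/\log n\rceil$, $N=n+k$, and fix a blue/red coloring $c$ of $Q_N$ with blue set $\mathcal{B}$ and no blue copy of $Q_2$. Consider the \emph{canonical} copies of $Q_n$ in $Q_N$: ordered partitions $[N]=A\sqcup B\sqcup C$ with $|C|=n$, each defining the copy $\{A\cup X:X\subseteq C\}$. There are $\binom{N}{n}2^{k}$ such copies, and a short Vandermonde identity shows that every $Y\in Q_N$ belongs to exactly $\binom{N}{n}$ of them. Consequently the average number of blue elements in a uniformly random canonical copy equals $|\mathcal{B}|/2^{k}$. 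If $|\mathcal{B}|<2^{k}$ then some canonical copy is entirely red and we are done, so from here on I would assume $|\mathcal{B}|\ge 2^{k}$.

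\textbf{Upper bound: structural step.} At this point the plan is to leverage the absence of an induced $Q_2$ structurally: for any $X\subsetneq Y$ in $\mathcal{B}$, the blue elements strictly between $X$ and $Y$ must form a chain. This pins $\mathcal{B}$ down to something close to a union of antichains at a few central levels together with short chains attached to them, and in particular gives a strong bound on how many blue elements can lie inside any single canonical copy of $Q_n$. Combining this with the averaging bound, I would then perform a small local perturbation of the triple $(A,B,C)$ --- swapping $O(1)$ coordinates between $A$, $B$ and $C$ --- to ``push'' the surviving blue elements out of the copy, each swap costing only a polynomial factor in $N$. Since $2^{k}=2^{(2+\epsilon)n/\log n}$ dominates any such polynomial correction for large $n$, this should produce an all-red canonical copy of $Q_n$.

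\textbf{Main obstacle.} The delicate step will be quantitatively converting the $Q_2$-free structural description of $\mathcal{B}$ into a bound on the number of canonical copies that cannot be salvaged by perturbation. The threshold $k\approx 2n/\log n$ should emerge from balancing the entropy $\log\binom{N}{k}\approx k\log(N/k)$ of the perturbation space against the worst-case ``chain depth'' one is forced to walk through in $\mathcal{B}$; pinning down the constant $2+\epsilon$ (rather than an unspecified $O(1)$) will require tight control of this balance and is where I expect to spend most of the technical work.
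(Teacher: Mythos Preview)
Your upper-bound strategy has a genuine gap: averaging over canonical (interval) copies $\{A\cup X:X\subseteq C\}$ cannot succeed, even after perturbation, because a single antichain already blocks \emph{every} canonical copy while being $Q_2$-free. Color blue exactly the sets of size $\lfloor N/2\rfloor$. Each canonical copy with $|C|=n$ realises the contiguous range of sizes $|A|,|A|+1,\dots,|A|+n$, and since $|A|\le k<N/2<n\le|A|+n$ for large $n$, this range contains $\lfloor N/2\rfloor$; hence no canonical copy is all red. Swapping $O(1)$ coordinates between $A,B,C$ only produces other canonical copies, all equally blocked, so your perturbation step cannot recover. The red copy of $Q_n$ that the theorem guarantees is a genuinely non-interval embedding, and your outline provides no mechanism for finding one. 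Relatedly, the threshold $|\mathcal B|\ge 2^k$ carries almost no information: a single middle layer already has size roughly $2^N/\sqrt N\gg 2^k$, so the averaging step is vacuous in the hard cases.

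The paper's argument is entirely different and does not touch canonical copies or averaging. It first proves (Lemma~\ref{shift_lem}) that for any partition $[N]=\cX\cup\cY$ with $|\cY|=k$ and any linear ordering $\pi=(y_1,\dots,y_k)$ of $\cY$, either there is a red $\cX$-good copy of $\QQ(\cX)$ (each $X\subseteq\cX$ mapped to some $X\cup Y_X$ with $Y_X\subseteq\cY$; strictly more general than an interval), or there is a blue chain of the form $(X_0,\varnothing)\subset(X_1,\{y_1\})\subset\cdots\subset(X_k,\cY)$. Assuming no red $Q_n$, each of the $k!$ orderings of $\cY$ produces such a blue chain. With $k=\lceil(2+\epsilon)n/\log n\rceil$ one has $k!>2^{2n}$, so by pigeonhole two distinct orderings give chains sharing both $\cX$-endpoints $X_0$ and $X_k$; an index $i$ where their $\cY$-prefixes differ yields two incomparable blue middle vertices, and together with the common endpoints these four vertices form a blue $Q_2$. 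The constant $2$ arises exactly from the inequality $k!>2^{2n}$, not from any entropy--depth balance. (For the lower bound the paper, like you, does not give a self-contained argument and simply cites \cite{GMT}.)
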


Further known bounds on poset Ramsey numbers include results of Chen et al. \cite{CCCLL}, \cite{CCLL} as well as Chang et al. \cite{CGLMNPV}.
\\

In this paper,  we start a more systematic investigation of $R(P, Q_n)$ for a fixed poset $P$ and large $n$. This Ramsey number gives an analogue of the graph Ramsey number $R(H,K_n)$ that claims that every edge-coloring of a complete graph in red and blue with no given induced blue subgraph $H$, contains a large red clique of size $n$. 
Here, the goal is to provide a quantitative version of a  statement that every blue/red coloring of a Boolean lattice with no blue (induced) subposet $P$ contains a large red Boolean sublattice.  
One of the key roles here plays a small, three-vertex poset $\Lambda = (\Lambda, <)$,  with vertices $Z_1, Z_2$ and $Z_3$, such that $Z_1<Z_3$, $Z_2<Z_3$, and $Z_1$ and $Z_2$ incomparable.
A poset $V$ is the symmetric counterpart of $\Lambda$, having vertices $Z_1, Z_2 $ and $Z_3$, such that $Z_1>Z_3$, $Z_2>Z_3$, and $Z_1$ and $Z_2$  not comparable.

Our main result shows a sharp jump in the behaviour of $R(P, Q_n)$ as a function of $n$ depending whether or not $P$ contains a copy $\Lambda$ or $V$.

\begin{theorem}\label{thm-MAIN}
For every poset $P$ there is an integer $n_0$ such that for any $n>n_0$ the following holds.
If $P$ contains a copy of $\Lambda$ or $V$, then $R(P, Q_n) \geq n + \frac{1}{15} \frac{n}{\log n}$.
If $P$ contains neither a copy of $\Lambda$ nor a copy of $V$, then $R(P, Q_n) \leq n + f(P)$, for some function $f$.
\end{theorem}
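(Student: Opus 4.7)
\medskip

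\textbf{Upper bound.} A short structural lemma characterizes the hypothesis: a poset with no copy of $\Lambda$ and no copy of $V$ is a disjoint union of chains, since the absence of $\Lambda$ (respectively $V$) forces the strict down-set (respectively up-set) of every element to be a chain, and so the Hasse diagram of $P$ has in- and out-degree at most one at each vertex---making every connected component a chain. Writing $P\cong C_{\ell_1}\sqcup\cdots\sqcup C_{\ell_k}$ with chain-lengths $\ell_i$, I would first prove a single-chain bound $R(C_\ell,Q_n)\le n+g(\ell)$: in any coloring of $Q_{n+g(\ell)}$ with no blue $C_\ell$, labeling each vertex $X$ by the length of the longest blue chain in $\{Y:Y\subseteq X\}$ partitions blue into $\ell-1$ antichains, and then an auxiliary ``antichain-removal'' lemma (removing $k$ antichains from $Q_N$ still preserves a large induced Boolean sublattice) produces a red $Q_n$. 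For general $P$ I would iterate: find a blue $C_{\ell_1}$ whose ground-set support is some $S_1\subseteq[N]$, pass to the sub-Boolean lattice on coordinates ``incomparable'' to $S_1$, recurse to obtain the remaining $k-1$ chains, and combine them on pairwise incomparable sides of $Q_N$ into a blue copy of $P$. Bookkeeping yields $f(P)=O(|P|)$.

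\medskip

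\textbf{Lower bound reductions.} Monotonicity of $R(\cdot,Q_n)$ in the first argument gives $R(\Lambda,Q_n),R(V,Q_n)\le R(P,Q_n)$ whenever $P$ contains a copy of $\Lambda$ or $V$, so it suffices to lower-bound these two Ramsey numbers. Moreover, the complementation map $X\mapsto[N]\setminus X$ on $Q_N$ is order-reversing, swaps copies of $\Lambda$ and $V$, and preserves copies of $Q_n$ (using the self-duality of $Q_n$), so a blue/red coloring avoiding blue $\Lambda$ and red $Q_n$ transports to one avoiding blue $V$ and red $Q_n$. Hence it suffices to exhibit a coloring of $Q_N$ with $N=n+\lfloor n/(15\log n)\rfloor$ whose blue set is $\Lambda$-free---equivalently, for every blue $X$ the set of blue elements strictly below $X$ forms a chain, so blue decomposes into disjoint ``upward-trees'' rooted at minimal elements with branches going only upward---and whose red set contains no induced copy of $Q_n$.

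\medskip

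\textbf{Construction and main obstacle.} The crux is the explicit construction. I would attempt a pseudorandom approach: partition $[N]$ into blocks of size $\Theta(\log n)$, on each block independently select a small random $\Lambda$-free family, aggregate these into a global $\Lambda$-free blue set, and apply a union bound over all induced copies of $Q_n$ in $Q_N$---at least $\binom{N}{n}2^{N-n}$ of which are axis-aligned intervals, with general copies polynomially more numerous---to show that with positive probability every copy is met by blue. The main obstacle is the severity of the $\Lambda$-free restriction: no two incomparable blue elements may share a blue lower bound, so blue cannot contain any ``fan'' pattern, and ordinary random colorings collapse immediately. The $n/\log n$ improvement reflects the optimal trade-off between the sparsity forced by the tree structure of blue and the density required to block every $Q_n$-copy; extracting this trade-off quantitatively---likely via a careful counting of induced $Q_n$-copies combined with an entropy- or compression-style covering lemma---is the technical heart of the lower bound.
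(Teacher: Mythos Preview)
Your structural observation and single-chain bound are fine and match the paper's Corollary~\ref{strucI} and Corollary~\ref{shift_cor}. Your recursive ``find a chain, pass to an incomparable sublattice, repeat'' scheme is different from the paper and, as written, underspecified: after finding a blue chain on support $S_1$ there is no canonical sub-Boolean lattice ``incomparable to $S_1$'', and you would need to argue carefully that enough coordinates survive each step. The paper avoids this entirely by working in parallel rather than sequentially: it takes $K$ extra coordinates $\cY$ with $\binom{K}{K/2}\ge k$, chooses an antichain $Y_1,\dots,Y_k$ in $\QQ(\cY)$ by Sperner, and observes that the $k$ translated copies $Q^i=\{X\cup Y_i: X\subseteq\cX\}$ of $\QQ(\cX)$ are pairwise elementwise incomparable. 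A blue $\ell$-chain is then found in each $Q^i$ independently via the chain/antichain lemma, and the union is automatically a blue copy of $P$. This gives $f(P)=K+\ell$ with no recursion.

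\textbf{Lower bound.} Here there is a genuine gap. Your reductions (monotonicity in the first argument, and the complementation symmetry $\Lambda\leftrightarrow V$) are correct and are exactly what the paper uses. But your proposed construction---independent block-wise random $\Lambda$-free families plus a union bound over all induced copies of $Q_n$ in $Q_N$---is not a viable route at the stated scale. The number of induced copies of $Q_n$ in $Q_{n+k}$ is enormous (already $\binom{N}{n}$ axis-aligned ones, and far more general ones), while any $\Lambda$-free blue family is extremely sparse; a naive union bound does not close. The paper's key idea, which your proposal is missing, is a \emph{duality theorem}: in a coloring with no blue $\Lambda$, for any bipartition $[N]=\cX\cup\cY$ there is a red $\cX$-good copy of $\QQ(\cX)$ if and only if there is \emph{no} blue ``$\cY$-shrub'' (a $\cY$-good copy of the factorial tree $\cO(\cY)$, an up-tree on $\sum_{i\le k} k!/(k-i)!$ vertices). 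This converts ``block every red $Q_n$'' into ``exhibit a blue $\cY$-shrub for each of the $\binom{N}{k}$ sets $\cY$'', a vastly smaller target for a union bound.

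The construction then has nothing to do with coordinate blocks: for each $\cY$ one builds a concrete $\cY$-shrub inside a small auxiliary set $A_\cY$ (size about $\tfrac{3}{2}k\ln k$) and shifts it by a random set $X_\cY$ drawn from the remaining coordinates. Chernoff plus a union bound over the $\binom{N}{k}^2$ pairs $(\cY_1,\cY_2)$ shows that with positive probability all shifted shrubs are pairwise incomparable; since each shrub is an up-tree, their independent union is $\Lambda$-free. The $n/\log n$ scale falls out of matching $\binom{N}{k}^2\approx N^{2k}$ against the Chernoff savings, not from an entropy or covering argument over $Q_n$-copies. Without the duality reduction to shrubs, your sketch does not reach the quantitative target.
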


In order to show the lower bound, we prove a structural duality statement that together with a probabilistic construction allows to find a desired coloring. 
This is the first of a kind non-marginal improvement of a trivial lower bound for poset Ramsey numbers. 
Most other known lower bounds corresponded to so-called layered colorings of Boolean lattices, where any two vertices of the same size have the same color.
The only two constructions different from this and known so far are the aforementioned lower bound of Gr\'osz et al. \cite{GMT} 
as well as a construction of Bohman and Peng \cite{BP} improving the trivial lower bound for the diagonal case $R(Q_n,Q_n)\ge 2n$ to $2n+1$.\\

We show  the following bounds on the poset Ramsey number of $\Lambda$ versus $Q_n$.

\begin{theorem}\label{thm_main}Let $\epsilon>0$. There exists an $n_0\in\N$ such that for all $n\ge n_0$,
$$n+ \frac{1}{15}\cdot\frac{n}{\log n}\leq R(\Lambda,Q_n)\le n+ \big(1+\epsilon)\cdot\frac{n}{\log n}.$$
\end{theorem}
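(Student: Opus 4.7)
The lower bound is the specialization of Theorem~\ref{thm-MAIN} to $P=\Lambda$, so it reduces to (and will be proved together with) the construction for the general theorem. The plan for that construction is to first establish a structural duality statement classifying colorings of $Q_N$ with no blue $\Lambda$ (essentially, the Hasse diagram of the blue subposet is an in-forest: every blue element has a chain-shaped blue downset), and then exhibit a suitably random coloring of this restricted form. A union bound over all copies of $Q_n$ in $Q_N$ combined with the structural restriction will show that with positive probability no copy of $Q_n$ is monochromatically red; the constant $1/15$ will emerge from balancing the two competing probabilistic constraints.

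For the upper bound, the trivial inclusion $\Lambda\subseteq Q_2$ yields only $R(\Lambda,Q_n)\le R(Q_2,Q_n)\le n+(2+\epsilon)n/\log n$ via Theorem~\ref{thm_grosz}; the work is to halve the constant from $2+\epsilon$ to $1+\epsilon$. Fix $N=n+m$ with $m=\lceil(1+\epsilon)n/\log n\rceil$ and a blue/red coloring of $Q_N$ with no blue $\Lambda$. The key new structural feature beyond the $Q_2$-free setting is that, since there is no blue $\Lambda$, for every blue element $Y$ the set of blue elements strictly below $Y$ is a chain, so the Hasse diagram of the blue subposet is an in-forest. Equivalently, the intersection of blue with any maximal chain of $Q_N$ is itself a blue chain, which bounds the Lubell-type mass of blue by roughly $1$ instead of the bound $\approx 2$ that Gr\'osz--Methuku--Tompkins use for $Q_2$-free families.

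I would then adapt the chain-sampling scheme of Gr\'osz, Methuku, and Tompkins. Sample a uniformly random maximal chain $\emptyset=C_0\subset C_1\subset\cdots\subset C_N=[N]$; by the in-forest bound, the expected number of blue vertices on this chain is roughly $m+o(m)$ rather than $2m+o(m)$ as in the $Q_2$-free case. Hence with positive probability at least $n+1$ of the $C_i$ are red at a favorable configuration of levels $i_0<i_1<\cdots<i_n$. Using random ``direction'' elements chosen from $[N]\setminus C_{i_n}$, one builds a candidate embedding of $Q_n$ with base $C_{i_0}$ and $n$ direction sets $X_1,\ldots,X_n$, whose elements are $C_{i_0}\cup\bigcup_{j\in T}X_j$ for $T\subseteq[n]$; a union bound over these random choices, leveraging $m=\Omega(n/\log n)$, then produces some candidate embedding that is entirely red.

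The main obstacle will be the quantitative halving in the upper bound: formalizing that the in-forest structure halves the density of blue vertices on a random chain requires a careful Lubell/second-moment argument specialized to $\Lambda$-free rather than $Q_2$-free families, and one must do this sharply enough to recover the factor $1+\epsilon$ rather than a weaker constant like $1+c$. The lower bound's difficulty lies in the structural duality lemma itself — pinning down a canonical form for $\Lambda$-free colorings of $Q_N$ amenable to a probabilistic construction achieving $n+n/(15\log n)$.
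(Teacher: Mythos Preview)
Your lower bound sketch is in the right spirit: the paper does prove a structural duality (Theorem~\ref{thm_duality}) and then exhibits a random coloring whose blue part is a pairwise independent union of ``$\cY$-shrubs'' (Theorem~\ref{thm_LB}), with the duality certifying that no red $Q_n$ exists. Your phrase ``union bound over all copies of $Q_n$'' is slightly off---the union bound is over the $\binom{N}{k}$ choices of $\cY$, which by the duality is equivalent---but the plan is essentially theirs.

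Your upper bound plan, however, has a genuine gap and is not how the paper proceeds. The claim that the $\Lambda$-free structure bounds the Lubell mass of blue by roughly $1$, or that the expected number of blue vertices on a random maximal chain is $m+o(m)$, is unjustified: the observation that ``blue intersected with a chain is a chain'' is vacuous (true for every family), and nothing in the hypotheses prevents the blue $\Lambda$-free family from having large Lubell mass. More seriously, even granting a chain with many red vertices, this does not produce a red $Q_n$: your ``direction element'' construction $C_{i_0}\cup\bigcup_{j\in T}X_j$ generates $2^n$ vertices whose colors you know nothing about, and no union bound over the choice of $X_1,\dots,X_n$ can force all of them red without additional structure you have not supplied. (The GMT argument you invoke does not build a red $Q_n$ this way either; it builds a blue $Q_2$ by pigeonholing $k!$ chains on their two endpoints.) The paper's upper bound instead reuses the \emph{same} duality theorem: if $Q_{n+k}$ has no blue $\Lambda$ and no red $Q_n$, Corollary~\ref{cor_duality} produces, for any fixed $k$-set $\cY$, a blue $\cY$-shrub; its $k!$ maximal vertices form an antichain, so Sperner gives $k!\le\binom{n+k}{\lfloor(n+k)/2\rfloor}\le 2^{n+k}$, forcing $k<(1+\epsilon)n/\log n$. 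The improvement from $2+\epsilon$ to $1+\epsilon$ thus comes from comparing a $k!$-antichain against $2^{n+k}$, not from any Lubell or random-chain computation.
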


More precisely, it can be seen that the lower bound holds for $\log n_0\ge535$, while the upper bound requires $\log n_0\ge \frac{36}{\epsilon^2}$.
Note that $R(\Lambda,Q_n)\le R(Q_2,Q_n)$, so Theorem \ref{thm_grosz} already implies a bound for $R(\Lambda,Q_n)$ 
which is weaker but asymptotically equal to the upper bound of Theorem \ref{thm_main}.
In this paper we provide 
a duality statement that allows us to prove both the lower and the improved upper bound on $R(\Lambda, Q_n)$. 
Theorem \ref{thm_grosz} and Theorem \ref{thm-MAIN} also give a lower bound for $R(Q_2, Q_n)$ which is asymptotically tight not only in the first but also in the second summand.

\begin{corollary}\label{cor_Q2}
$$R(Q_2,Q_n) = n + \Theta\left(\frac{n}{\log (n)}\right).$$
\end{corollary}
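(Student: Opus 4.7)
The plan is to derive the corollary by combining the two bounds already available in the paper. The upper bound $R(Q_2, Q_n) \le n + (2+\epsilon)n/\log n$ is given directly by Theorem~\ref{thm_grosz}, so no additional work is needed on that side; it immediately contributes the $O(n/\log n)$ upper half of the $\Theta$ statement.

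For the matching lower bound, I would invoke Theorem~\ref{thm-MAIN} with $P = Q_2$. The key observation is that $Q_2$ itself contains a copy of $\Lambda$: using ground set $\{1,2\}$, the three elements $\emptyset, \{1\}, \{1,2\}$ do not form $\Lambda$, but the triple $\emptyset, \{1\}, \{2\}$ does, since $\emptyset < \{1\}$, $\emptyset < \{2\}$, and $\{1\}, \{2\}$ are incomparable. (Equivalently, $\{1\}, \{2\}, \{1,2\}$ form a copy of $V$.) Hence the hypothesis of the first case of Theorem~\ref{thm-MAIN} is satisfied, giving $R(Q_2, Q_n) \ge n + \tfrac{1}{15}\cdot \tfrac{n}{\log n}$ for all sufficiently large $n$.

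Putting the two inequalities together yields
\[
n + \tfrac{1}{15}\cdot\tfrac{n}{\log n} \;\le\; R(Q_2, Q_n) \;\le\; n + (2+\epsilon)\tfrac{n}{\log n},
\]
for $n$ large enough, which is exactly $R(Q_2,Q_n) = n + \Theta(n/\log n)$. There is essentially no obstacle here beyond the simple containment check; the substantive work lives in Theorems~\ref{thm_grosz} and~\ref{thm-MAIN}, and the corollary is just the observation that the fixed poset $P = Q_2$ falls into the ``contains $\Lambda$ or $V$'' side of the dichotomy established by Theorem~\ref{thm-MAIN}.
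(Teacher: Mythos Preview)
Your argument is correct and matches the paper's intended approach: the corollary is simply the combination of the upper bound from Theorem~\ref{thm_grosz} with the lower bound from Theorem~\ref{thm-MAIN} applied to $P=Q_2$. One small slip: you have the names $\Lambda$ and $V$ swapped---in the paper's convention $\Lambda$ has a unique \emph{maximum} (so $\{1\},\{2\},\{1,2\}$ is a copy of $\Lambda$, while $\varnothing,\{1\},\{2\}$ is a copy of $V$)---but since $Q_2$ contains both, this does not affect the argument.
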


The structure of the paper is as follows.
In Section \ref{sec_prelim} we introduce some notations and a new type of poset and show some useful propositions.
In Section \ref{sec_grosz} we provide an alternative proof of the upper bound in Theorem \ref{thm_grosz}. 
This makes our paper self-contained since we need this result for Corollary \ref{cor_Q2}.
In Section \ref{sec_duality} we provide a structural duality statement, Theorem \ref{thm_duality}, which is the key tool for the main proofs.
In Section \ref{sec_random} we use a probabilistic construction to find a coloring with ``good'' properties.
Lastly, in Section \ref{sec_proof} we complete the proofs of Theorem \ref{thm_main} and Theorem \ref{thm-MAIN}.
\\


\section{Preliminaries}\label{sec_prelim}

\subsection{Basic Notation}
Let $\cX$ and $\cY$ be disjoint sets. 
Then the vertices of the Boolean lattice $\QQ(\cX\cup\cY)$, i.e.\ the unordered subsets of $\cX\cup\cY$, can be partitioned with respect to $\cX$ and $\cY$ in the following manner.
Every $Z\subseteq\cX\cup\cY$ has an $\cX$-part $X_Z=Z\cap \cX$ and a $\cY$-part $Y_Z=Z\cap\cY$.
In this setting, we refer to $Z$ alternatively as the pair $(X_Z,Y_Z)$. 
Conversely, for all $X\subseteq\cX$, $Y\subseteq\cY$, the pair $(X,Y)$ has a $1$-to-$1$ correspondence to the vertex $X\cup Y\in \QQ(\cX\cup\cY)$.
One can think of such pairs as elements of the Cartesian product $2^\cX \times 2^\cY$ which has a canonic bijection to $2^{\cX\cup\cY}=\QQ(\cX\cup\cY)$.
Observe that for $X_i\subseteq\cX, Y_i\subseteq\cY$, $i\in[2]$, we have $(X_1,Y_1)\subseteq (X_2,Y_2)$ if and only if $X_1\subseteq X_2$ and $Y_1\subseteq Y_2$. We omit floors and ceilings where appropriate.
\\

For any poset, we refer to vertices $Z_1,Z_2$ which are incomparable as $Z_1\nsim Z_2$.
For a positive integer $n\in\N$, we use $[n]$ to denote the set $\{1,\dots,n\}$. Given an integer $n\in\N$ and a set $\cX$, let $\binom{\cX}{n}$ be the set of all $n$-element subsets of $\cX$.
Throughout the paper, `$\log$' always refers to the logarithm with base $2$, while `$\ln$' refers to the natural logarithm.
\\ \bigskip

\subsection{Structure of posets with forbidden $\Lambda$ or $V$}
A poset $\TT$ is an \textit{up-tree} if there is a unique minimal vertex in $\TT$ and for every vertex $X\in\TT$, the set $\{Y\in\TT\colon Y\le X\}$ is a chain, i.e.,\ its vertices are pairwise comparable.
We say that two posets are \textit{independent} if they are vertexwise incomparable. Furthermore, a collection of posets is \textit{independent} if they are pairwise independent.

We use this notation to describe posets which don't contain a copy of $\Lambda$ (or $V$). 

\begin{lemma}\label{strucL}
Let $P$ be a poset. There is no copy of $\Lambda$ in $P$ if and only if $P$ is an independent collection of up-trees.
\end{lemma}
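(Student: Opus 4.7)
The plan is to prove the two directions separately, with the forward implication being routine and the structural characterization in the reverse direction doing the real work.

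\textbf{Forward direction.} If $P$ is an independent collection of up-trees and $\{Z_1,Z_2,Z_3\}$ forms a copy of $\Lambda$ in $P$ with $Z_1,Z_2 < Z_3$ and $Z_1 \nsim Z_2$, then all three vertices must lie in the same up-tree $\TT$ by independence, since $Z_1$ and $Z_2$ are both comparable to $Z_3$. But the set $\{Y \in \TT \colon Y \le Z_3\}$ is by assumption a chain and contains the incomparable pair $Z_1,Z_2$, which is absurd.

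\textbf{Reverse direction.} Assume $P$ is $\Lambda$-free. The first step is to extract the essential consequence of the hypothesis: for every $X \in P$, the principal down-set $D(X) = \{Y \in P \colon Y \le X\}$ is a chain, because any two incomparable elements of $D(X)$ together with $X$ would realize a copy of $\Lambda$. Write $m_X$ for the minimum of $D(X)$. Next, I would define a relation $\approx$ on $P$ by setting $X \approx Y$ iff $X$ and $Y$ share some common lower bound. Reflexivity and symmetry are immediate; for transitivity, if $Z_1 \le X,Y$ and $Z_2 \le Y,W$, then $Z_1, Z_2$ both belong to the chain $D(Y)$ and are therefore comparable, and the smaller of the two is a common lower bound of $X$ and $W$. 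I would then verify that each equivalence class $C$ is an up-tree: the down-sets-are-chains condition is inherited from $P$, and for any $X, Y \in C$ with common lower bound $Z$, both $m_X$ and $m_Y$ lie in the chain $D(Z)$, so they are comparable, and minimality in their respective down-sets forces $m_X = m_Y$. This common value is a minimum (hence the unique minimal vertex) of $C$. Finally, if $X$ and $Y$ lie in different classes they must be incomparable, because any comparability would provide a trivial common lower bound and merge the classes.

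The main obstacle is choosing the equivalence relation so that transitivity is governed by exactly the piece of structure that $\Lambda$-freeness delivers. The ``common lower bound'' variant works because transitivity reduces precisely to the chain structure of $D(Y)$; more naive choices, such as the transitive closure of comparability, would not isolate where the $\Lambda$-free hypothesis enters and would be considerably harder to control.
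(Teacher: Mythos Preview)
Your proof is correct and shares the paper's key insight: $\Lambda$-freeness is equivalent to every principal down-set $D(X)=\{Y:Y\le X\}$ being a chain. The paper simply \emph{states} at the outset that ``$P$ is an independent collection of up-trees if and only if every $D(X)$ is a chain'' as an observation, and then both directions of the lemma become one-liners. You instead prove the harder half of that observation explicitly, building the up-tree decomposition by hand via the ``common lower bound'' equivalence relation. So the approaches are the same in spirit but differ in where the work is placed: the paper's version is more concise but leaves the structural equivalence to the reader, while yours is fully self-contained and makes transparent exactly where the chain property of $D(Y)$ is used (namely, in the transitivity step).
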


\begin{proof} Observe that a poset $P$ is an independent collection of up-trees if and only if for every vertex $X\in P$, $\{Y\in P\colon Y\le X\}$ forms a chain.

Suppose that there is a copy of $\Lambda$ in $P$ on vertices $Z_i$, $i\in[3]$ with $Z_1<Z_3$, $Z_2<Z_3$ and $Z_1\nsim Z_2$.
Then $Z_1,Z_2$ witness that $\{Y\in P\colon Y<Z_3\}$ is not a chain, so $P$ is not an independent collection of up-trees.

Now assume that $P$ is not an independent collection of up-trees. 
Then there exist some $X\in P$ and $Z_1,Z_2 \in \{Y\in P\colon Y\le X\}$ such that $Z_1\nsim Z_2$. 
Since $X$ is comparable to all vertices in $\{Y\in P\colon Y\le X\}$, $X> Z_1, X> Z_2$. Now $X,Z_1,Z_2$ form a copy of $\Lambda$.
\end{proof}

By symmetry an analogous statement holds for posets with forbidden induced copy of $V$.
If we forbid both $V$ and $\Lambda$ simultaneously we obtain the following structure.
\begin{corollary}\label{strucI}
Let $P$ be a poset such that there is neither a copy of $V$ nor of $\Lambda$.
Then $P$ is an independent collection of chains.
\end{corollary}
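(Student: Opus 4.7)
The plan is to combine Lemma \ref{strucL} with its symmetric counterpart, and then argue that the only posets that are simultaneously an independent collection of up-trees and an independent collection of down-trees are independent collections of chains.

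First I would invoke Lemma \ref{strucL} to conclude that, since $P$ contains no copy of $\Lambda$, it decomposes as an independent collection of up-trees $T_1,\dots,T_k$. Since $P$ also contains no copy of $V$, the symmetric version of Lemma \ref{strucL} (a down-tree being defined dually, with a unique maximum and every principal upset $\{Y \ge X\}$ forming a chain) tells us that $P$ is also an independent collection of down-trees $S_1,\dots,S_\ell$.

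Next I would fix an up-tree component $T_i$ with unique minimum $m_i$, and observe that $m_i$ is comparable to every other element of $T_i$. Because the down-trees $S_j$ form an independent collection, every element of $T_i$ that is comparable to $m_i$ must lie in the same down-tree as $m_i$; hence $T_i \subseteq S_j$ for some $j$, and by symmetry $S_j \subseteq T_i$, so $T_i = S_j$. In particular $T_i$ has a unique maximum $M_i$, and applying the up-tree defining property to $X = M_i$ shows that $\{Y \in T_i : Y \le M_i\} = T_i$ is a chain.

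Thus each $T_i$ is a chain and the chains $T_1,\dots,T_k$ are pairwise independent, giving the corollary. The argument is essentially bookkeeping; the only spot where one has to be careful is justifying that an up-tree component cannot be split across several down-tree components, which follows from the observation that the common minimum of the up-tree forces a comparability that no nontrivial split could tolerate.
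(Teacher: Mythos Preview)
Your argument is correct and follows exactly the line the paper indicates: the paper states Corollary~\ref{strucI} without proof, merely noting that it follows from Lemma~\ref{strucL} together with its symmetric version for $V$. The one step you flag---that an up-tree component $T_i$ coincides with some down-tree component $S_j$---is fine: once $T_i\subseteq S_j$, the dual argument gives $S_j\subseteq T_{i'}$ for some $i'$, and disjointness of the $T_i$'s forces $i'=i$.
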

\bigskip

\subsection{Embeddings of  $Q_n$}
When considering an embedding $\phi$ of a Boolean lattice $Q_n$ into a larger Boolean lattice $\QQ(\cZ)$, we can partition $\cZ$ such that it has the following nice property.

\begin{lemma}\label{embed_lem}
Let $n\in\N$. Let $\cZ$ be a set with $|\cZ|>n$ and let $Q=\QQ(\cZ)$. If there is an embedding $\phi\colon Q_n\to Q$,
then there exist a subset $\cX\subsetneql\cZ$ with $|\cX|=n$, and an embedding $\phi'\colon \QQ(\cX)\to Q$ such that
 $\phi'(X)\cap \cX=X$ for all $X\subseteq \cX$.
\end{lemma}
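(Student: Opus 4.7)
The plan is to single out, for each atom of the source lattice, a canonical element of $\cZ$ that behaves like that atom. Write $Q_n=\QQ(\cY)$ with $|\cY|=n$. For each $y\in\cY$, define
$$R_y = \phi(\{y\}) \setminus \phi(\cY\setminus\{y\}).$$
The set $R_y$ is nonempty: since $\{y\}\nsubseteq\cY\setminus\{y\}$ and $\phi$ is an embedding, $\phi(\{y\})\nsubseteq\phi(\cY\setminus\{y\})$. Choose some $z_y\in R_y$ for each $y\in\cY$. These choices are pairwise distinct, because if $z_y=z_{y'}$ for $y\ne y'$, then on the one hand $z_y\in\phi(\{y\})$, and on the other hand $\{y\}\subseteq\cY\setminus\{y'\}$ gives $\phi(\{y\})\subseteq\phi(\cY\setminus\{y'\})$, forcing $z_{y'}\in\phi(\cY\setminus\{y'\})$ and contradicting $z_{y'}\in R_{y'}$. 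Set $\cX=\{z_y:y\in\cY\}\subseteq\cZ$; since $|\cX|=n<|\cZ|$, we have $\cX\subsetneql\cZ$ with the desired size.

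Next, transport $\phi$ along the bijection $y\leftrightarrow z_y$ by defining, for $I\subseteq\cY$,
$$\phi'\bigl(\{z_y:y\in I\}\bigr) \;=\; \phi(I).$$
This is well defined since the $z_y$ are distinct, and it is an embedding of $\QQ(\cX)$ into $Q$ because $\phi$ is one and the two lattices are being identified via a bijection of ground sets. It remains to verify that $\phi'(X)\cap\cX=X$ for every $X\subseteq\cX$. Writing $X=\{z_y:y\in I\}$, I would check the two containments vertex by vertex. If $y'\in I$, then $z_{y'}\in\phi(\{y'\})\subseteq\phi(I)=\phi'(X)$, so $z_{y'}\in\phi'(X)\cap\cX$. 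Conversely, if $y'\notin I$, then $I\subseteq\cY\setminus\{y'\}$ yields $\phi(I)\subseteq\phi(\cY\setminus\{y'\})$, and since $z_{y'}\in R_{y'}$ is outside the latter, $z_{y'}\notin\phi'(X)$.

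I do not anticipate a serious obstacle: the whole argument hinges on the single observation that the "difference" set $R_y=\phi(\{y\})\setminus\phi(\cY\setminus\{y\})$ is nonempty, a direct consequence of $\phi$ being an embedding, and the rest is straightforward bookkeeping. The only mildly subtle point is the distinctness of the chosen $z_y$, but this falls out of the same comparison between $\phi(\{y\})$ and $\phi(\cY\setminus\{y'\})$.
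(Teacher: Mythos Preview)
Your argument is correct and follows essentially the same approach as the paper: pick, for each atom $y$, an element $z_y\in\phi(\{y\})\setminus\phi(\cY\setminus\{y\})$, check distinctness, and transport $\phi$ along the resulting bijection. The paper phrases the difference set as $\phi(\{a\})\setminus\bigcup_{X'\subseteq\cX'\setminus\{a\}}\phi(X')$, but since $\phi$ is order-preserving this union equals $\phi(\cX'\setminus\{a\})$, so your formulation is the same and slightly cleaner.
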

\begin{proof}Let the ground set of $Q_n$ be $\cX'$.
We consider the embedding of singletons of $Q_n$, i.e.\ $\phi(\{a\})$, $a\in\cX'$.
If $\phi(\{a\})\subseteq \bigcup_{X'\subseteq \cX'\backslash\{a\}} \phi(X')$, 
then $\phi(\{a\})\subseteq \bigcup_{X'\subseteq \cX'\backslash\{a\}} \phi(X') \subseteq \phi(\cX'\backslash\{a\})$. 
But $\{a\}\nsubseteq \cX'\backslash\{a\}$ and $\phi$ is an embedding, a contradiction. Thus $\phi(\{a\})\not\subseteq \bigcup_{X'\subseteq \cX'\backslash\{a\}} \phi(X')$.
For every $a\in\cX'$, pick an arbitrary $$b(a)\in \phi(\{a\})\backslash\bigcup_{X'\subseteq \cX'\backslash\{a\}} \phi(X').$$
Note that $b(a_1)\notin \phi(\{a_2\})$ for any $a_1,a_2\in\cX'$, $a_1\neq a_2$, so all representatives are distinct.
Let $\cX=\{b(a) \colon a\in\cX'\}$. We see that  the map $b: \cX' \to \cX$ is a bijection.
For every   $B\subseteq \cX$, let $A_B\subseteq \cX'$ be such that $B=\{b(a): a \in A_B\}$. 
We define $\phi'\colon \QQ(\cX)\to Q$ as follows:  $\phi'(B) = \phi(A_B)$, $B\in \cX$.
Then $\phi'$ is an embedding.
Observe that for $X\subseteq \cX$ and $b\in\cX$, \ $b\in \phi'(X)$ if and only if $b\in X$. Thus $\phi'(X)\cap \cX=X$ for all $X\subseteq \cX$.
This concludes the proof.
\end{proof}

We call an embedding $\phi$  of $\QQ(\cX)$ into $\QQ(\cX\cup \cY)$ for disjoint $\cX$ and $\cY$, \textit{$\cX$-good} if $\phi(X)\cap \cX = X$  for all $X\in \cX$. 
We also call a copy $Q$ of $\QQ(\cX)$ in $\QQ(\cX\cup \cY)$ $\cX$-\textit{good} if there is an $\cX$-good embedding of $\QQ(\cX)$ into $\QQ(\cX\cup \cY)$.

Lemma \ref{embed_lem} claims in particular that for any copy of $Q_n$ in a larger Boolean lattice $Q$, 
there is a subset $\cX$ of the ground set of $Q$ with $|\cX|=n$ such that there is an $\cX$-good copy of $\QQ(\cX)$ in $Q$. 
\\
\bigskip

\subsection{Red copy of $Q_n$ vs.\ blue chain}\label{sec_grosz}

The main goal of this subsection is to present an alternative proof for the upper bound of Theorem \ref{thm_grosz}.
Gr\'osz, Methuku, and Tompkins \cite{GMT} stated the following lemma using a different formulation.
While they used algorithmic tools in their proof, we prove the statement recursively. 
Recall that for a given partition $\cX\cup \cY$ of the ground set of a Boolean lattice we denote a vertex $X\cup Y\in\QQ(\cX\cup\cY)$, where $X\subseteq\cX, Y\subseteq\cY$, as $(X,Y)$.

\begin{lemma}
\label{shift_lem}
Let $\cX$, $\cY$ be disjoint sets with $|\cX|=n$ and $|\cY|=k$, for some $n,k\in\N$. Let $Q=\QQ(\cX\cup\cY)$ be a blue/red colored Boolean lattice.
Fix some linear ordering  $\pi=(y_1,\dots,y_k)$  of $\cY$ and define $Y(0), \ldots, Y(k)$ by $Y(0)=\varnothing$ and  $Y(i)=\{y_1,\dots,y_i\}$ for $i\in[k]$.
Then there exists at least one of the following in $Q$:
\renewcommand{\labelenumi}{(\alph{enumi})}
\begin{enumerate}
\item a red $\cX$-good copy of $\QQ(\cX)$, or
\item a blue chain of length $k+1$ of the form $(X_0,Y(0)),\dots,(X_{k},Y(k))$ where $X_0 \subseteq X_1 \subseteq\dots \subseteq X_k\subseteq \cX$.
\end{enumerate}
\end{lemma}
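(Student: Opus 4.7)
My plan is to prove the lemma via a single recursive ``level assignment'' rather than by induction on $k$. Specifically, I would define a function $g\colon\QQ(\cX)\to\{0,1,\ldots,k\}\cup\{\infty\}$ by recursion on $|X|$. Setting $h(X):=\max\{g(X'):X'\subsetneq X\}$ with the convention $\max\varnothing:=0$, let
\[
g(X):=\min\bigl\{j\in\{h(X),h(X)+1,\ldots,k\}:(X,Y(j))\text{ is red}\bigr\},
\]
and $g(X):=\infty$ if the above set is empty. Intuitively $g(X)$ is the earliest level along the chain $Y(0)\subsetneq\cdots\subsetneq Y(k)$ at which $(X,Y(\cdot))$ is red, subject to $g$ being monotone nondecreasing.

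First I would treat the ``good'' case, where $g(X)\le k$ for every $X\subseteq\cX$. Here I set $\phi(X):=X\cup Y(g(X))$. Each image is red by the definition of $g$; the map is $\cX$-good since $Y(g(X))\subseteq\cY$ is disjoint from $\cX$; and it is an order embedding because $X_1\subsetneq X_2$ forces $g(X_1)\le h(X_2)\le g(X_2)$, while the converse direction uses $\phi(X_i)\cap\cX=X_i$. This yields conclusion (a).

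Otherwise I would be in the ``bad'' case: fix $X^*$ minimal under inclusion with $g(X^*)=\infty$. Then $g(X')\le k$ for every $X'\subsetneq X^*$, so $h(X^*)\le k$, and the failure $g(X^*)=\infty$ forces $(X^*,Y(j))$ to be blue for all $j\in[h(X^*),k]$. I would then backtrack through a strictly descending sequence $X^*=W^{(s)}\supsetneq W^{(s-1)}\supsetneq\cdots\supsetneq W^{(0)}$, choosing $W^{(i-1)}$ to attain $g(W^{(i-1)})=h(W^{(i)})$ (which exists whenever $W^{(i)}\neq\varnothing$). Writing $\tau_i:=h(W^{(i)})$ and $\tau_{s+1}:=k+1$, the identity $\tau_{i+1}=g(W^{(i)})$ together with the definition of $g$ at $W^{(i)}$ yields $(W^{(i)},Y(j))$ blue for every $j\in[\tau_i,\tau_{i+1}-1]$. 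These intervals tile $\{0,1,\ldots,k\}$ since $\tau_0=0$, so setting $X_j:=W^{(i)}$ on the $i$th interval delivers a blue chain $(X_0,Y(0)),\ldots,(X_k,Y(k))$ with $X_0\subseteq\cdots\subseteq X_k\subseteq\cX$ because the $W^{(i)}$ are nested, giving conclusion (b).

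The main thing to get right is the bookkeeping in the bad case: the blue intervals $[\tau_i,\tau_{i+1}-1]$ must fit together without gaps or overlaps. This is ensured by design, since the identity $\tau_{i+1}=g(W^{(i)})$ forces the upper endpoint at level $i$ to coincide with the lower endpoint at level $i+1$. The only non-automatic check is that the descent reaches $\tau_0=0$, which holds because any strictly decreasing sequence of subsets of the finite set $\cX$ must eventually hit $\varnothing$, at which point $h=\max\varnothing=0$ by convention.
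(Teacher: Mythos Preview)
Your proof is correct and follows essentially the same approach as the paper: your level function $g$ coincides with the paper's label $\ell_X$ (with $h(X)=\ell'_X$), and your red embedding $\phi(X)=X\cup Y(g(X))$ is exactly the paper's. The only cosmetic difference is that the paper argues by contradiction, carrying along an invariant ``there is a blue chain of length $\ell_X$ below $(X,Y(\ell_X))$'' and invoking it the moment some $\ell_X$ fails to exist, whereas you allow $g(X)=\infty$ and then reconstruct that same blue chain explicitly by backtracking from a minimal $X^*$ with $g(X^*)=\infty$; the two arguments are unwinding the same recursion.
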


\begin{proof}
Suppose that there is no blue chain as described in (b).
For every $X\subseteq\cX$, we recursively define a label $\ell_X\in\{0,\dots,k\}$ such that $\phi\colon \QQ(\cX)\to Q$, $\phi(X)=(X,Y(\ell_X))$, is an embedding with monochromatic red image. 
We require $\ell_X$ to fulfill three properties: 
\begin{enumerate}[label=(\arabic*)]
\item For any $X'\subseteq X$, \ $\ell_{X'}\le \ell_{X}$.
\item There is a blue chain of length $\ell_X$ contained 
in the 
Boolean lattice
with ground set $X\cup Y(\ell_X)$, which we denote by $Q^{X}$.
\item $(X,Y(\ell_X))$ is red.
\end{enumerate}

First, consider the vertex $\varnothing$. Let $\ell_\varnothing$ be the minimum $\ell$, $0\le \ell\le k$, such that $(\varnothing,Y(\ell))$ is red.
If such an $\ell$ does not exist, then $(\varnothing,Y(0)), \dots, (\varnothing,Y(k))$ form a blue chain, a contradiction. 
It is clear to see that Properties (1) and (3) hold.
If $\ell_\varnothing=0$, (2) is trivially true. 
If $\ell_\varnothing\ge 1$, $(\varnothing,Y(0)), \dots, (\varnothing,Y(\ell_{\varnothing}-1))$ form a blue chain of length $\ell_\varnothing$ and $(2)$ holds as well.
\\

Consider an arbitrary $X\subseteq\cX$ and suppose that for all $X'\subsetneql X$ we already defined $\ell_{X'}$ with Properties (1)-(3). Let $\ell'_X=\max_{\{U\subsetneql X\}} \ell_U$.
Then let $\ell_X$ be the minimum~$\ell$, $\ell'_X\le \ell \le k$ such that $(X,Y(\ell))$ is colored in red.
If there is no such $\ell$, then $(X,Y(\ell'_X)),\dots,(X,Y(k))$ is a blue chain of length $k-\ell'_X+1$. By definition of $\ell'_X$ there is some $U\subsetneql X$ with $\ell_U=\ell'_X$.
In particular, $(2)$ holds for $U$, so there is a blue chain of length $\ell'_X$ in $Q^{U}$. Note that $(U,Y(\ell_U))\subsetneql(X,Y(\ell'_X))$, 
so we obtain a blue chain of length $k+1$. This is a contradiction, thus $\ell_X$ is well-defined and fulfills Property (3). 

If $\ell_X=\ell'_X$, consider the aforementioned blue chain of length $\ell'_X$ in $Q^{U}$, and otherwise consider this chain together with $(X,Y(\ell'_X)), \dots, (X,Y(\ell_X-1))$.
In both cases, we obtain a blue chain of length $\ell_X$, which proves $(2)$.
For $X'\subsetneql X\subseteq \cX$, \ $\ell_{X'}\le \ell'_X\le \ell_X$, thus (1) holds.
\\

%
%
%

We define $\phi\colon \QQ(\cX)\to Q$, $\phi(X)=(X,Y(\ell_X))$. Note that $\phi(X)\cap\cX=X$ for every $X\subseteq\cX$ and Property (3) implies that $\phi(X)$ is red.
Let $X_1,X_2\subseteq\cX$. If $\phi(X_1)\subseteq\phi(X_2)$, it is immediate that $X_1\subseteq X_2$.
Conversely, if $X_1\subseteq X_2$, then by Property (1) we have $\ell_{X_1}\le \ell_{X_2}$. Thus $(X_1,Y(\ell_{X_1}))\subseteq(X_2,Y(\ell_{X_2}))$.
As a consequence, $\phi$ is an $\cX$-good embedding of $\QQ(\cX)$.
\end{proof}

This Lemma implies the following corollary which is already given in an alternative form by Axenovich and Walzer, see Lemma 4 of \cite{AW}.

\begin{corollary}\label{shift_cor}
Let $\cX,\cY$ be disjoint sets with $|\cX|=n$ and $|\cY|=k$. 
Let $\PP$ be a subposet of a Boolean lattice $Q=\QQ(\cX\cup\cY)$ such that there is no chain of length $k+1$ in $\PP$. 
Then there exists a copy of $Q_n$ in $Q$ which contains no vertex of $\PP$.
\end{corollary}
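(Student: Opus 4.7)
The plan is to apply Lemma \ref{shift_lem} to a naturally chosen two-coloring of $Q$. Specifically, color every vertex of $\PP$ blue and every vertex of $Q\setminus\PP$ red, and fix any linear ordering $\pi=(y_1,\dots,y_k)$ of $\cY$ with associated chain $Y(0)\subseteq Y(1)\subseteq\dots\subseteq Y(k)$. Lemma \ref{shift_lem} then yields one of its two outcomes.

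The outcome (b) would produce a blue chain of the form $(X_0,Y(0)),(X_1,Y(1)),\dots,(X_k,Y(k))$ with $X_0\subseteq X_1\subseteq\dots\subseteq X_k\subseteq\cX$. These $k+1$ vertices are pairwise distinct because their $\cY$-parts $Y(0),\dots,Y(k)$ are pairwise distinct (of strictly increasing size), so they form a chain on $k+1$ vertices. Since all of them are blue, they all lie in $\PP$, contradicting the hypothesis that $\PP$ contains no chain of length $k+1$. Hence outcome (b) cannot occur.

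Therefore outcome (a) holds: there is a red $\cX$-good copy of $\QQ(\cX)$ in $Q$. This copy is an embedded image of $Q_n$ all of whose vertices are red, which by construction means that none of them belong to $\PP$, as required.

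There is essentially no obstacle beyond bookkeeping; the only subtle point is making sure we interpret ``length $k+1$'' consistently as $k+1$ vertices, and verifying that the chain produced by Lemma \ref{shift_lem}(b) indeed consists of $k+1$ distinct vertices — which follows automatically from the distinctness of the second coordinates $Y(0),\dots,Y(k)$, regardless of whether the first coordinates $X_0,\dots,X_k$ happen to repeat.
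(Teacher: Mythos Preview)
Your proof is correct and follows essentially the same approach as the paper: color $\PP$ blue and its complement red, apply Lemma \ref{shift_lem}, and rule out outcome (b) using the hypothesis on chains in $\PP$. Your added remark that the $k+1$ vertices in (b) are distinct because their $\cY$-parts $Y(0),\dots,Y(k)$ are distinct is a welcome clarification, but otherwise the argument matches the paper's.
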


\begin{proof} Fix an arbitrary linear ordering of $\cY$.
Furthermore, let $c: Q \to \{blue,red\}$ be the coloring such that
\begin{equation} \nonumber
c(X) = 
 \begin{cases}
\text{blue},  		\hspace*{0.7cm}\mbox{ if } ~~  X\in \PP,\\
\text{red}, 		\hspace*{0.8cm}\mbox{ otherwise.}
\end{cases}
\end{equation}
There is no blue chain of length $k+1$ in $c$, so by Lemma \ref{shift_lem} there is a monochromatic red copy of $Q_n$ in $Q$.
This copy does not contain any vertex of $\PP$.
\end{proof}

With the help of Lemma \ref{shift_lem}, we can now prove an upper bound for $R(Q_2,Q_n)$. The concluding arguments are due to Gr\'osz, Methuku, and Tompkins \cite{GMT}.

\begin{proof}[Proof of Theorem \ref{thm_grosz}]
For the lower bound, the reader is referred to \cite{GMT}. 
For the upper bound, let $k\in\N$ with $k=\tfrac{(2+\epsilon)n}{\log (n)}$. 
Let $\cX$ and $\cY$ be disjoint sets with $|\cX|=n$ and $|\cY|=k$. Consider a blue/red coloring of $Q=\QQ(\cX\cup\cY)$ with no monochromatic red copy of $Q_n$.
Let $\pi=(y_1^\pi,\dots,y_k^\pi)$ be a linear ordering of $\cY$.
By Lemma \ref{shift_lem}, there exists a blue chain of length $k+1$ 
of the form $(X^\pi_0,\varnothing), (X^\pi_1,\{y^\pi_1\}),(X^\pi_2,\{y^\pi_1,y^\pi_2\}),\dots, (X^\pi_k,\cY)$ where $X^\pi_i\subseteq\cX$.
\\

Note that there are $k!$ distinct orderings of $\cY$. 
For each linear ordering $\pi$ of $\cY$ we consider $X^\pi_0$ and $X^\pi_k$, i.e.\ the minimal and maximal vertex of the aforementioned chain restricted to $\cX$.
By the choice of $k$, we obtain $k!> 2^{2n}$. In particular by pigeonhole principle, there are distinct $\pi_1,\pi_2$ 
with $X^{\pi_1}_0=X^{\pi_2}_0$ and $X^{\pi_1}_k=X^{\pi_2}_k$.
Since $\pi_1,\pi_2$ are distinct, there exists an index $1\le i\le k-1$ with $\{y^{\pi_1}_1,\dots,y^{\pi_1}_i\}\neq \{y^{\pi_2}_1,\dots,y^{\pi_2}_i\}$.
Then the four vertices 
$$(X^{\pi_1}_0,\varnothing), (X^{\pi_1}_i,\{y^{\pi_1}_1,\dots,y^{\pi_1}_i\}), (X^{\pi_2}_i,\{y^{\pi_2}_1,\dots,y^{\pi_2}_i\}), (X^{\pi_1}_k,\cY)$$ form a blue copy of $Q_2$.
\end{proof}
\bigskip

\subsection{Factorial trees and shrubs}
Besides the Boolean lattice, there is another poset which plays a major role in this paper, which we call the \textit{factorial tree}.\\

Consider the set of ordered subsets of a fixed non-empty set $\cY$, that also could be thought of as a set of strings with non-repeated letters over the alphabet $\cY$. 
Note that we also allow the empty set as such an ordered subset. 
Occasionally, if it is clear from the context, we refer to the empty ordered set $(\varnothing,\le)$ simply as $\varnothing$.
For an ordered subset $S$ of $\cY$, we refer to its underlying unordered set as $\underline{S}$. Let $|S|=|\underline{S}|$ be the \textit{size} of $S$.
We also say that $S$ is an \textit{ordering} of $\underline{S}$. 
\\

Let $S$ be an ordered subset of $\cY$. A \textit{prefix} of $S$ is an ordered subset $T$ of $\cY$ consisting of the first $|T|$ elements of $S$ in the ordering induced by $S$.
If $T$ is a prefix of $S$, we write $T\le_\cO S$. Note that the empty ordered set is a prefix of every ordered set. 
If $T\neq S$, we say that a prefix $T$ of $S$ is \textit{strict}, denoted by $T<_\cO S$. 
Observe that the prefix relation $\le_\cO$ is transitive, reflexive and antisymmetric. Let $\cO(\cY)$ be the poset of all ordered subsets of $\cY$ equipped with $\le_\cO$. 
We say that this poset is the \textit{factorial tree} on ground set $\cY$.  
\\

In a factorial tree $\cO(\cY)$ for every vertex $S\in\cO(\cY)$, the set of prefixes $\{T\in\cO(\cY)\colon T\le_\cO S\}$ induces a chain.  Furthermore, the vertex $\varnothing$ is the unique minimal vertex of $\cY$, thus $\cO(\cY)$ is an up-tree. 
\\

Let $\cX$ and $\cY$ be disjoint sets. Let $Q=\QQ(\cX\cup\cY)$ and $\cO(\cY)$ be the factorial tree with ground set $\cY$. 
An embedding $\tau$ of $\cO(\cY)$ into $Q$ is \textit{$\cY$-good} if for every $S\in \cO(\cY)$, $\tau(S)\cap \cY=\underline{S}$.
We say that a subposet $\PP$ of $Q$ is a \textit{$\cY$-good copy} of $\cO(\cY)$ if
there exists a $\cY$-good embedding $\tau: \cO(\cY) \to Q$ with image $\PP$. We refer to such a copy also as a \textit{$\cY$-shrub}.\\

Besides that, we also consider a related subposet with slightly weaker conditions.
A \textit{weak $\cY$-shrub} is a subposet $\PP$ of $Q$ such that there is a function $\tau: \cO(\cY) \to Q$ with image $\PP$ 
such that for every $S\in\cO(\cY)$, \  $\tau(S)\cap \cY=\underline{S}$ and for every $S,T\in \cO(\cY)$ with $S<_\cO T$, \ $\tau(S)\subsetneql \tau(T)$. 
In particular, a weak $\cY$-shrub might not correspond to an injective embedding of  $\cO(\cY) $.
\\

Clearly a $\cY$-shrub is also a weak $\cY$-shrub. Surprisingly, the converse statement is also true in Boolean lattices not containing a copy of $\Lambda$.

\begin{proposition}\label{obs_shrub}
Let $\cX$ and $\cY$ be disjoint sets, let $Q=\QQ(\cX\cup\cY)$.
Let $\PP$  be a weak $\cY$-shrub in $Q$ such that $\PP$ contains no copy of $\Lambda$. 
Then $\PP$ is a $\cY$-shrub.
\end{proposition}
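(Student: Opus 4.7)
The plan is to show directly that the function $\tau\colon\cO(\cY)\to Q$ witnessing the weak $\cY$-shrub structure of $\PP$ is already a $\cY$-good embedding. Its $\cY$-goodness is part of the definition of a weak $\cY$-shrub, as is the forward implication $S\le_\cO T\Rightarrow \tau(S)\subseteq \tau(T)$. So it suffices to prove the converse: whenever $\tau(S)\subseteq\tau(T)$ in $Q$, one has $S\le_\cO T$ in $\cO(\cY)$. This converse also delivers injectivity, since $\tau(S)=\tau(T)$ would yield $S\le_\cO T$ and $T\le_\cO S$, hence $S=T$ by antisymmetry of the prefix order.

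To prove the converse, I would assume for contradiction that $\tau(S)\subseteq\tau(T)$ while $S\nleq_\cO T$. The case $T<_\cO S$ is immediately ruled out: the weak shrub property would give $\tau(T)\subsetneql \tau(S)$, clashing with $\tau(S)\subseteq\tau(T)$. Thus $S\nsim T$ in $\cO(\cY)$. Let $S'$ be their longest common prefix, and let $y,z\in\cY$ be the elements immediately following $S'$ in $S$ and $T$, respectively, so that $y\neq z$. Denote by $S_1$ and $T_1$ the prefixes of $S$ and $T$ of length $|S'|+1$, so that $\underline{S_1}=\underline{S'}\cup\{y\}$ and $\underline{T_1}=\underline{S'}\cup\{z\}$ are incomparable as subsets of $\cY$. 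Since $\tau(S_1)\cap\cY=\underline{S_1}$ and $\tau(T_1)\cap\cY=\underline{T_1}$, this yields $\tau(S_1)\nsim\tau(T_1)$ in $Q$.

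The final step is to extract a copy of $\Lambda$ on the three vertices $\tau(S_1),\tau(T_1),\tau(T)\in\PP$. Transitivity together with the weak shrub property gives $\tau(S_1)\subseteq\tau(S)\subseteq\tau(T)$, and this inclusion is strict because $z\in\underline{T}\subseteq\tau(T)$ while $z\notin\underline{S_1}=\tau(S_1)\cap\cY$. For $\tau(T_1)\subseteq\tau(T)$ I claim strictness as well: if $T_1=T$ then $\tau(S_1)\subseteq\tau(T)=\tau(T_1)$ would force $\underline{S_1}\subseteq\underline{T_1}$, contradicting that $\underline{S_1}$ and $\underline{T_1}$ have equal size but differ in $y$ versus $z$. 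Hence $T_1<_\cO T$, and the weak shrub property gives $\tau(T_1)\subsetneql\tau(T)$. The three vertices are then pairwise distinct and satisfy $\tau(S_1)<\tau(T)$, $\tau(T_1)<\tau(T)$, $\tau(S_1)\nsim\tau(T_1)$; in other words they form a copy of $\Lambda$ in $\PP$, contradicting the hypothesis.

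The only delicate point is the ruling out of the degenerate case $T_1=T$, where the would-be top vertex of the $\Lambda$ coincides with $\tau(T_1)$; comparing $\cY$-parts of equal size disposes of it cleanly. Everything else is bookkeeping on prefixes and the weak shrub property.
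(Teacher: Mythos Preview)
Your proposal is correct and follows essentially the same approach as the paper: both arguments show that $\tau$ is already an embedding by taking the longest common prefix $S'$ of $S$ and $T$, extending by one element on each side to get $S_1,T_1$ with incomparable $\cY$-parts, and exhibiting a copy of $\Lambda$ on $\tau(S_1),\tau(T_1),\tau(T)$. The only cosmetic difference is that the paper organizes the argument as a case analysis (first noting $\underline{S}\subseteq\underline{T}$ and splitting on whether $|S|=|S'|$ and whether $|T|\le|S'|+1$), whereas you set it up as a contradiction and dispose of the boundary case $T_1=T$ by comparing $\cY$-parts; the content is the same.
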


\begin{proof}
Let $\tau: \cO(\cY) \to Q$ be a map such that for every $S,T\in \cO(\cY)$ with $S<_\cO T$, we have  $\tau(S)\subsetneql \tau(T)$ and $\tau(S)\cap \cY=\underline{S}$, and let $\PP$ be its image. 
For all $S\in\cO(\cY)$, let $X_S=\tau(S)\cap \cX$, i.e.\ $\tau(S)=(X_S,\underline{S})$.
We shall show that $\tau$ is an embedding, thus proving that $\PP$ is a $\cY$-shrub. For that we need to prove that the condition $\tau(S)\subseteq\tau(T)$ implies that $S\le_\cO T$
for any ordered subsets $S$ and $T$ of $\cY$.
\\

Let $\tau(S)\subseteq\tau(T)$, i.e.\ $(X_S,\underline{S})\subseteq(X_{T},\underline{T})$.
In particular, $\underline{S}\subseteq \underline{T}$ and so $|S|\le |T|$.
Let $R$ be the largest common prefix of $S$ and $T$. Such exists since $\varnothing$ is a prefix of every ordered set.
If $|R|=|S|$, then 
$S=R\le_\cO T$ and we are done. So we can assume that $|S|\ge |R|+1$.\\

If $|T|\le |R|+1$, then $|R|+1\le |S|\le |T|\le |R|+1$. This implies $|S|=|T|$ and  since $\underline{S}\subseteq \underline{T}$, we have $\underline{S}=\underline{T}$.
Let $\{y\}= \underline{S}\backslash \underline{R}=\underline{T}\backslash \underline{R}$.
Then both $S,T$ have $R$ as prefix of size $|S|-1=|T|-1$ and $y$ as final vertex. Thus $S=T$ and we are done as well.\\

From now on, we assume that $|S|\ge |R|+1$ and $|T|> |R|+1$.
Consider prefixes $S'\le_\cO S$ and $T'\le_\cO T$ of size $|R|+1$. 
Then $R$ is a prefix of both $S'$ and $T'$. 
Let $y_S$ such that $\underline{S'}\backslash \underline{R}=\{y_S\}$ and let $y_T$ with $\underline{T'}\backslash \underline{R}=\{y_T\}$.

If $y_S=y_T$, we obtain $S'=T'$, which implies that $R$ is not the largest common prefix of $S$ and $T$, a contradiction.

If $y_S\neq y_T$, the unordered sets $\underline{S'}$ and $\underline{T'}$ are not comparable. In particular, $(X_{T'},\underline{T'})$ and $(X_{S'},\underline{S'})$ are incomparable.
Because $S'\le_\cO S$, $T'<_\cO T$ and by our initial assumption, we know that $(X_{S'},\underline{S'})\subseteq (X_S,\underline{S})\subseteq(X_{T},\underline{T})$ and $(X_{T'},\underline{T'})\subseteq (X_{T},\underline{T})$. 
Since $|S'|=|T'|=|R|+1<|T|$, we obtain that both $(X_{S'},\underline{S'})$ and $(X_{T'},\underline{T'})$ are proper subsets of $(X_{T},\underline{T})$.
Then the three vertices $(X_{T},\underline{T}), (X_{T'},\underline{T'})$ and $(X_{S'},\underline{S'})$ form a copy of $\Lambda$ in $Q$, so we reach a contradiction.
\end{proof}

\subsection{Construction of an almost optimal shrub}

Let $\cY$ be a $k$-element set. Note that a $\cY$-shrub has $k!$ maximal vertices corresponding to all permutations of $\cY$. 
These maximal vertices form an antichain, i.e.\ are pairwise incomparable. 
Sperner's theorem implies that a ground set of any $\cY$-shrub must have size at least $q$, where $\binom{q}{\lfloor q/2 \rfloor} \geq k!$, so $q\ge k(\log k +\log e) + o(k)$.
Next, we shall construct a $\cY$-shrub which is almost optimal in the sense that $\cY$ has ground set of size almost matching the lower bound above.

\begin{proposition} \label{canonical-shrub}
Let $\cY$ be a $k$-element set.
Let $A$ be a set disjoint from $\cY$ such that $|A| \geq k \cdot \min\{\log k + \log\log k,11\}$.
Then there is a $\cY$-shrub in $\QQ(A\cup\cY)$.
\end{proposition}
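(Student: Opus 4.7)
The plan is to build an explicit $\cY$-good embedding $\tau\colon \cO(\cY)\to\QQ(A\cup\cY)$ by slicing $A$ into positional blocks and encoding each ordered subset letter-by-letter via antichain signatures. First I would partition $A$ into $k$ disjoint blocks $A_1,\dots,A_k$ of equal size $m$, chosen large enough that $\binom{m}{\lfloor m/2\rfloor}\ge k$. This is where the hypothesis on $|A|$ enters: on the one hand $\binom{11}{5}=462$ handles all small $k$, and on the other hand the estimate $\binom{m}{\lfloor m/2\rfloor}\ge 2^m/(m+1)$ suffices once $m\approx \log k+\log\log k$. By Sperner's theorem applied to $\QQ(A_i)$, I can then fix for each $i\in[k]$ an antichain $\{\sigma(y,i)\subseteq A_i : y\in\cY\}$ of $k$ pairwise incomparable subsets, one per letter of $\cY$.

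Next, given an ordered subset $S=(s_1,\dots,s_i)\in\cO(\cY)$, I would set
\[
X_S=\bigcup_{p=1}^{i}\sigma(s_p,p)\subseteq A,\qquad \tau(S)=X_S\cup\underline{S}.
\]
The identity $\tau(S)\cap\cY=\underline{S}$ is immediate from this definition, so the $\cY$-good property holds automatically. To verify that $\tau$ is an order embedding, first observe that if $S\le_\cO T$ then $S$ is a prefix of $T$, so $X_S\subseteq X_T$ and $\underline{S}\subseteq \underline{T}$, giving $\tau(S)\subseteq\tau(T)$. Conversely, suppose $\tau(S)\subseteq\tau(T)$; then $\underline{S}\subseteq\underline{T}$, so $|S|\le|T|$, and if $S$ were not a prefix of $T$ there would be a smallest index $p\le|S|$ at which they disagree. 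But then $\tau(S)\cap A_p=\sigma(s_p,p)$ while $\tau(T)\cap A_p=\sigma(t_p,p)$, two incomparable antichain elements of $\QQ(A_p)$, contradicting $\tau(S)\cap A_p\subseteq \tau(T)\cap A_p$. Hence $S\le_\cO T$, as required. Injectivity follows from the same first-difference argument applied to the $A$-parts alone.

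I expect no serious obstacle. The only slightly delicate point is verifying the Sperner bound $\binom{m}{\lfloor m/2\rfloor}\ge k$, which requires a short case split between the regimes where $\log k+\log\log k\le 11$ (use $m=11$ and the explicit value $\binom{11}{5}=462$) and where $\log k+\log\log k>11$ (use $m=\lceil\log k+\log\log k\rceil$ together with $\binom{m}{\lfloor m/2\rfloor}\ge 2^m/(m+1)$). Everything else is bookkeeping about first-difference indices and the way incomparability in $A_p$ propagates to incomparability in $\QQ(A\cup\cY)$.
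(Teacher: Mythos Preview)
Your proposal is correct and follows essentially the same approach as the paper: partition $A$ into $k$ equal blocks, fix a Sperner antichain of size $k$ in each block, and encode each ordered subset position-by-position via these antichains, so that a first-difference position witnesses incomparability. The only cosmetic difference is that the paper assigns the \emph{entire} block $A_{i_1}$ to the first letter $y_{i_1}$ and then cyclically shifts the block index for subsequent letters (position $p$ uses block $A_{i_1+p-1}$), whereas you use a fixed block $A_p$ for position $p$ and an antichain element for every position including the first; your variant is slightly cleaner and the verification is identical.
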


\begin{proof}
Let $\cY=\{y_0, \ldots, y_{k-1}\}$ and let $Q=\QQ(A\cup\cY)$.
We use addition of indices modulo $k$.
Let $A_0, \ldots, A_{k-1}$ be pairwise disjoint subsets of $A$  such that  $|A_i|= \ell$ for the smallest integer $\ell$ satisfying
$\binom{\ell}{\lfloor \ell/2 \rfloor} \geq  k$. 
Since $\ell\le \log k +\log\log k$ for $k\ge 256$ and $\ell\le 11$ for $k\le256$, such $A_i$'s can be chosen.
Let  $\big\{A_i^j: j\in \{0, \ldots, k-1\}\big\}$ be an antichain in $\QQ(A_i)$, $i\in\{0,\ldots, k-1\}$. 
Such an antichain exists by Sperner's theorem.\\

Consider the factorial tree $\cO(\cY)$. We shall construct an embedding $\tau$ of  $\cO(\cY)$ into $Q$ as follows. 
Let $\tau(\varnothing)= \varnothing$. 
Consider any non-empty ordered subset of $\cY$, say $(y_{i_1}, y_{i_2}, \ldots, y_{i_j})$, $1\leq j\leq k$. 
If $j=1$, let $\tau((y_{i_1}))=A_{i_1}\cup \{y_{i_1}\}$. If $j>1$, let 
 $$\tau( (y_{i_1}, \ldots, y_{i_j})) = A_{i_1}\cup A_{i_1+1}^{i_2} \cdots \cup A_{i_1+j-1}^{i_j} \cup \{y_{i_1}, \ldots, y_{i_j}\}.$$
For example for $k=4$, $\tau((y_0, y_1, y_2)) = A_0\cup A_1^1\cup A_2^2 \cup \{y_1, y_2, y_3\}$,  
$\tau((y_2, y_3, y_1)) = A_2 \cup A_3^3 \cup A_0^1 \cup \{y_1, y_2, y_3\}$, and $\tau((y_3,  y_1)) = A_3 \cup A_0^1  \cup \{y_1, y_3\}$.\\

Note that the image of $\cO(\cY)$ under $\tau$ is an up-tree, $\cT$, whose minimum vertex is $\varnothing$, see Figures \ref{fig:shrub}  and \ref{fig:shrub-1}. 
We see that each maximum vertex of $\cT$ is joined to $\varnothing$ by a unique chain, a maximal chain. 
Furthermore, non-zero vertices that belong to distinct maximal chains are incomparable.
Observe that $\tau$ is a $\cY$-good embedding of $\cO(\cY)$ into $Q$. 
Indeed, for any ordered sequence  of distinct vertices $(y_{i_1}, \ldots, y_{i_j})$, we have 
$\tau( (y_{i_1}, \ldots, y_{i_j}))\cap \cY = \{y_{i_1}, \ldots, y_{i_j}\}$.  In addition  $(y_{i_1}, \ldots, y_{i_q})<_{\cO} (y_{i_1}, \ldots, y_{i_p})$ if and only if 
 $\tau((y_{i_1}, \ldots, y_{i_q}))$ and  $\tau ((y_{i_1}, \ldots, y_{i_p}))$  are in the same maximal chain of $T$ in the corresponding order. 
\end{proof}


\begin{figure}
\includegraphics[width=\textwidth]{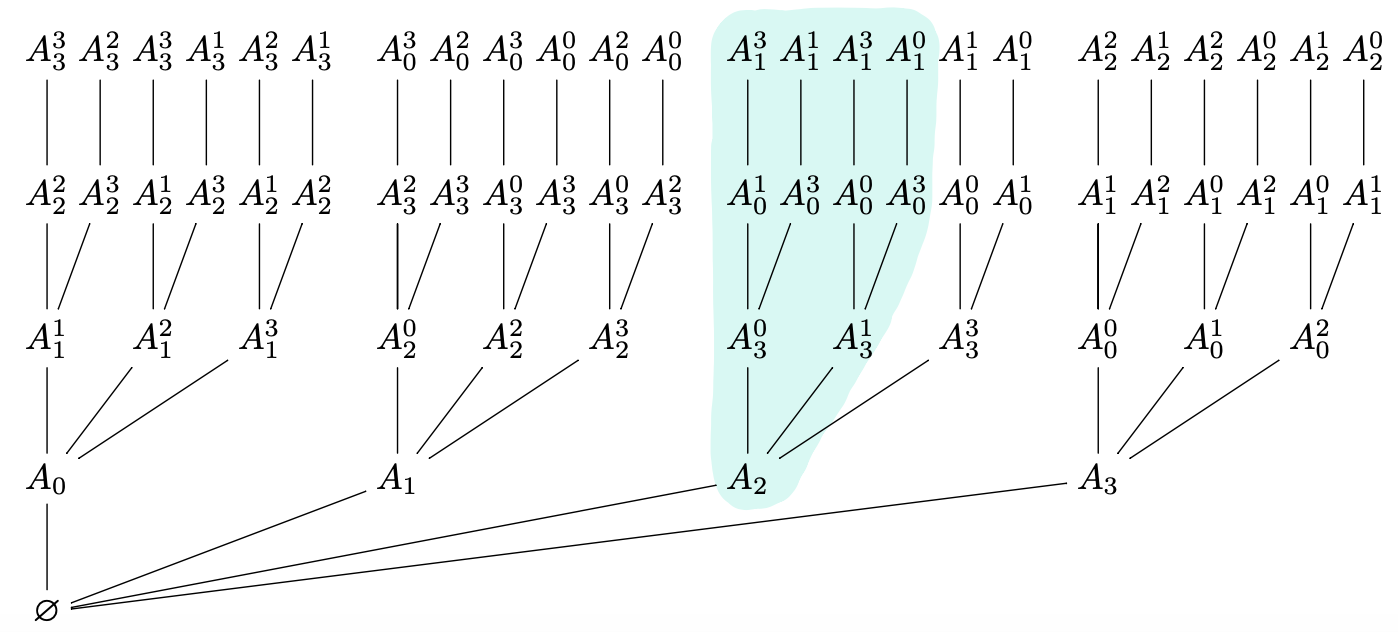}
\caption{A diagram illustrating how the $A_i^j$'s  are being assigned to the elements of the $\{y_0, y_1, y_2, y_3\}$-shrub constructed in Proposition \ref{canonical-shrub}.}
\label{fig:shrub}
\end{figure}

\begin{figure}
\includegraphics[width=\textwidth]{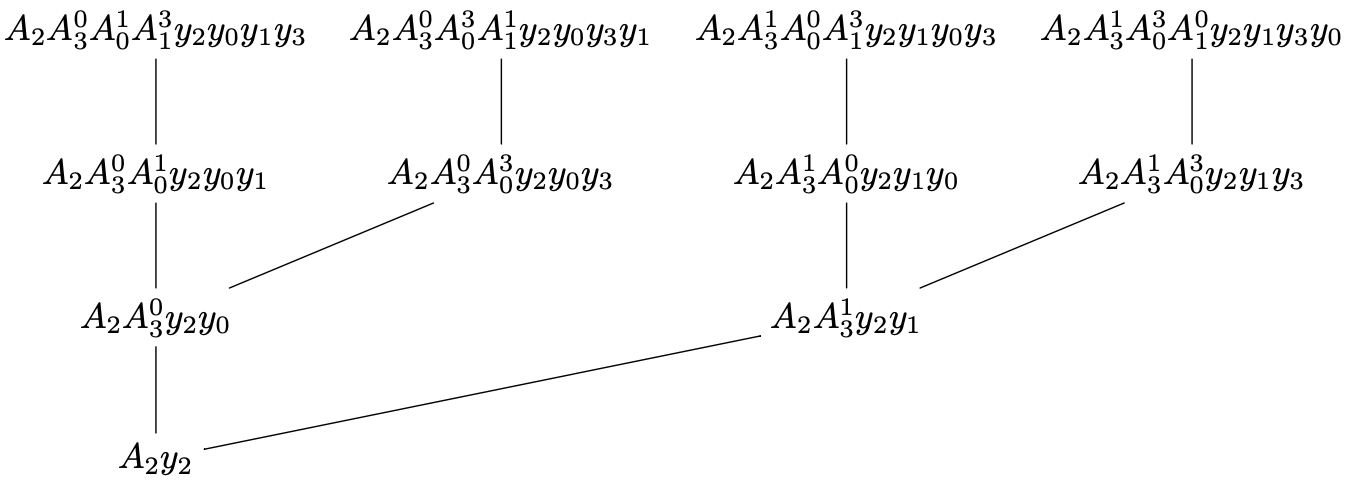}
\caption{Segment of a shrub highlighted in Figure \ref{fig:shrub}.  Here the union signs are omitted because of the spacing, for example $A_2  A_3^1  A_0^0 y_2 y_1 y_0$ corresponds to 
the shrub vertex $A_2 \cup  A_3^1 \cup A_0^0\cup \{ y_2, y_1, y_0\}$.}
\label{fig:shrub-1}
\end{figure}

\section{Duality theorem}\label{sec_duality}

In this section, we show a duality statement which is the key argument for the proof of Theorem \ref{thm_main}. Recall the following definitions. 
We call an embedding $\phi$ of $\QQ(\cX)$ into $\QQ(\cX\cup \cY)$ for disjoint $\cX$ and $\cY$, \textit{ $\cX$-good} if $\phi(X)\cap \cX = X$  for all $X\subseteq \cX$. 
We also call a copy of $\QQ(\cX)$ in $\QQ(\cX\cup \cY)$ $\cX$-\textit{good} if there is an $\cX$-good embedding of $\QQ(\cX)$ into $\QQ(\cX\cup \cY)$.
An embedding $\tau$ of $\cO(\cY)$ into $\QQ(\cX\cup \cY)$ is \textit{ $\cY$-good} if $\tau(S)\cap\cY=\underline{S}$ for all $S\in\cO(\cY)$.
We say that a copy of $\cO(\cY)$ is a \textit{$\cY$-shrub} if there exists a $\cY$-good embedding of $\cO(\cY)$ into $\QQ(\cX\cup \cY)$.

\begin{theorem}[Duality Theorem]\label{thm_duality}
For two disjoint sets $\cX$ and $\cY$, let $Q=\QQ(\cX\cup \cY)$ be a blue/red colored Boolean lattice which contains no blue copy of $\Lambda$.
Then there is exactly one of the following in $Q$: 
\begin{itemize}
\item a red $\cX$-good copy of $\QQ(\cX)$, or
\item a blue $\cY$-good copy of $\cO(\cY)$, i.e.\ a blue $\cY$-shrub.
\end{itemize}


\end{theorem}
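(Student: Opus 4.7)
My plan splits into showing the two alternatives are mutually exclusive and showing that at least one of them occurs. For mutual exclusion, suppose both objects coexist: a red $\cX$-good copy given by $\phi(X) = X \cup Y_X$ (with $Y_\bullet$ monotone under $\subseteq$) and a blue shrub given by $\tau(S) = X_S \cup \underline{S}$ (with $X_\bullet$ monotone under $\leq_\cO$). I would construct an increasing chain $\varnothing = S_0 <_\cO S_1 <_\cO \cdots$ in $\cO(\cY)$ while maintaining the invariant $\underline{S_i} \subseteq Y_{X_{S_i}}$. If equality $\underline{S_i} = Y_{X_{S_i}}$ ever holds, then the single vertex $(X_{S_i}, \underline{S_i}) = (X_{S_i}, Y_{X_{S_i}})$ is simultaneously $\tau(S_i)$ (blue) and $\phi(X_{S_i})$ (red), a contradiction. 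Otherwise pick $y \in Y_{X_{S_i}} \setminus \underline{S_i}$ and set $S_{i+1} := S_i \cdot y$; the invariant is preserved by monotonicity of $X_\bullet$ and $Y_\bullet$. Since $|\underline{S_i}|$ strictly grows and is capped at $|\cY|$, we must reach the equality case within $|\cY|+1$ steps.

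For existence, I would induct on $|\cY|$. The base $|\cY|=0$ is immediate: either some vertex of $\QQ(\cX)$ is blue (giving a trivial shrub), or every vertex is red (and the identity embedding is a red $\cX$-good copy). For the inductive step, assume no red $\cX$-good copy in $Q$. For each $y \in \cY$, the upper set $U_y := \{Z \in Q \colon y \in Z\} \cong \QQ(\cX \cup (\cY \setminus \{y\}))$ inherits the no-blue-$\Lambda$ property and cannot contain a red $\cX$-good copy of $\QQ(\cX)$ (such a copy would also live in $Q$), so by induction $U_y$ houses a blue $(\cY \setminus \{y\})$-shrub $\sigma_y$. I would set $\tau(y \cdot T) := \sigma_y(T)$ for every non-empty sequence $y \cdot T \in \cO(\cY)$; this has the correct $\cY$-part $\underline{T} \cup \{y\}$ and inherits the monotonicity of $\sigma_y$ inside every branch of $\cO(\cY)$ rooted at $(y)$.

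The remaining task --- selecting the root $\tau(\varnothing) = (X_\varnothing, \varnothing)$ with $X_\varnothing \subseteq X^* := \bigcap_{y \in \cY}(\sigma_y(\varnothing) \cap \cX)$ --- is the main obstacle. When $\varnothing \in Q$ happens to be blue, setting $\tau(\varnothing) := \varnothing$ automatically sits below every $\sigma_y(\varnothing)$. Otherwise, if no suitable blue vertex exists, the entire ``red floor'' $\{(X, \varnothing) \colon X \subseteq X^*\}$ is red, and my plan is to combine this floor with the independently produced sub-shrubs $\sigma_y$ --- or, failing a clean combination, to strengthen the inductive hypothesis so that the $\cX$-parts of the sub-shrub roots can be prescribed --- in order to manufacture a red $\cX$-good copy of $\QQ(\cX)$ in $Q$, contradicting the standing assumption. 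The difficulty lies precisely in this root coordination, since the $\sigma_y$ come from independent inductive invocations with no a priori compatibility. Once the root is fixed, the assembled map is a weak $\cY$-shrub, and Proposition~\ref{obs_shrub} combined with the no-blue-$\Lambda$ hypothesis upgrades it to a genuine $\cY$-shrub.
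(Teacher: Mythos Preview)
Your mutual-exclusion argument is correct and is essentially the paper's own iteration, written a little more cleanly: you extend the ordered set one element at a time while the paper jumps in larger steps, but both maintain the same invariant and terminate by the bound $|\underline{S_i}|\le|\cY|$.

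The existence half has a genuine gap, and you have located it yourself: the root coordination. Your induction on $|\cY|$ produces, for each $y\in\cY$, a blue $(\cY\setminus\{y\})$-shrub $\sigma_y$ in $U_y$, but these arise from \emph{independent} invocations of the hypothesis, so nothing forces the roots $\sigma_y(\varnothing)$ to sit above a common blue vertex of the form $(X,\varnothing)$. Neither proposed remedy closes this. The ``red floor'' idea fails because knowing that every $(X,\varnothing)$ with $X\subseteq X^*$ is red only gives a red copy of $\QQ(X^*)$ at the bottom layer; it carries no information about red $\cX$-good embeddings of the full $\QQ(\cX)$, and the $\sigma_y$ are blue objects with no bearing on red copies. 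The ``prescribe the root'' idea is in the right direction, but to make it work you must track, for every pair $(X,Y)$, whether a red $\cX$-good embedding of the principal filter $\{X'\colon X'\supseteq X\}$ exists starting at or above $(X,Y)$. That is precisely the paper's notion of an \emph{embeddable} vertex. The paper proves a local characterisation (Lemma~\ref{lem3}): $(X,Y)$ is embeddable iff either it is blue and some $(X,Y')$ with $Y'\supsetneq Y$ is embeddable, or it is red and every $(X',Y)$ with $X'\supsetneq X$ is embeddable. Starting from ``$(\varnothing,\varnothing)$ not embeddable'' this lets one grow the blue weak shrub level by level (Lemma~\ref{lem4}), with root compatibility built in automatically at each step; Proposition~\ref{obs_shrub} then upgrades the result to a genuine shrub. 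Your branch-by-branch induction on $|\cY|$ cannot glue the branches without this embeddability bookkeeping, and once you add it you have essentially reproduced the paper's argument.
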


Informally speaking, this duality statement claims that for any bipartition $\cX\cup\cY$ of the ground set of a Boolean lattice there exists either a red copy of $\QQ(\cX)$ that is restricted to $\cX$ or a blue copy of the factorial tree $\cO(\cY)$ restricted to $\cY$.
This result can be seen as a strengthening of Lemma \ref{shift_lem} in the special case when we forbid a blue copy of $\Lambda$.
The Duality Theorem implies a criterion for blue/red colored Boolean lattices $Q$ to have neither a blue copy of $\Lambda$ nor a red copy of $Q_n$.

\begin{corollary}\label{cor_duality}
Let $n,k\in\N$ and $N=n+k$. Let $Q=\QQ([N])$ be a blue/red colored Boolean lattice with no blue copy of $\Lambda$.
There is no red copy of $Q_n$ in $Q$ if and only if for every $\cY\in\binom{[N]}{k}$ there exists a blue $\cY$-shrub in $Q$.
\end{corollary}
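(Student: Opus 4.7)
The plan is to obtain the corollary essentially as a bookkeeping consequence of the Duality Theorem (Theorem \ref{thm_duality}) together with Lemma \ref{embed_lem}, by choosing, for each candidate $\cY\subseteq[N]$ of size $k$, the complementary bipartition $[N]=\cX\cup\cY$ with $\cX=[N]\setminus\cY$ of size $n$.

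For the forward direction I would fix an arbitrary $\cY\in\binom{[N]}{k}$ and set $\cX=[N]\setminus\cY$. Observing that any $\cX$-good copy of $\QQ(\cX)$ is in particular a copy of $Q_n$, the hypothesis that $Q$ contains no red copy of $Q_n$ rules out the first alternative of Theorem \ref{thm_duality} for this bipartition, so the second alternative must occur, producing the required blue $\cY$-shrub. This direction is essentially immediate.

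For the backward direction I would argue by contrapositive: assume $Q$ contains a red copy of $Q_n$, realized by some embedding $\phi\colon Q_n\to Q$. Applying Lemma \ref{embed_lem} to $\phi$ yields a set $\cX\subseteq[N]$ with $|\cX|=n$ and an $\cX$-good embedding $\phi'\colon\QQ(\cX)\to Q$ whose image is contained in the image of $\phi$, hence still entirely red. Setting $\cY=[N]\setminus\cX$, so that $|\cY|=k$, the hypothesis gives a blue $\cY$-shrub in $Q$. But Theorem \ref{thm_duality} asserts that \emph{exactly one} of the two alternatives can occur for the bipartition $(\cX,\cY)$, and we have produced both simultaneously, a contradiction.

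I do not expect any real obstacle: the only small care is to verify that the $\cX$-good copy extracted from Lemma \ref{embed_lem} inherits the red coloring from the original copy (which is transparent from the proof of that lemma, since $\phi'(B)=\phi(A_B)$), and to note that in applying the Duality Theorem I need the hypothesis ``no blue copy of $\Lambda$ in $Q$,'' which is given. The rest is purely a contrapositive/tautology packaging of Theorem \ref{thm_duality}.
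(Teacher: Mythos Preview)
Your proposal is correct and follows essentially the same approach as the paper: both directions are obtained by pairing Lemma~\ref{embed_lem} (to pass between ``red copy of $Q_n$'' and ``red $\cX$-good copy of $\QQ(\cX)$'') with the Duality Theorem applied to the bipartition $[N]=\cX\cup\cY$. Your explicit remark that $\phi'(B)=\phi(A_B)$ ensures the extracted $\cX$-good copy remains red is in fact slightly more careful than the paper's own phrasing, which leaves this implicit.
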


\begin{proof}
Lemma \ref{embed_lem} provides that there is a red copy of $Q_n$ in $Q$ if and only if 
there exists a partition $[N]=\cX\cup\cY$ of the ground set of $Q$ with $|\cX|=n$ and $|\cY|=k$ 
as well as an $\cX$-good embedding $\phi$ of $Q(\cX)$ into $Q$ with a monochromatic red image. 

If there is a red copy of $Q_n$ in $Q$, then for $\cX,\cY$ from Lemma \ref{embed_lem} there is also an $\cX$-good copy of $\QQ(\cX)$.
Thus by Theorem \ref{thm_duality} there is no blue $\cY$-shrub. 

On the other hand, if there is no red copy of $Q_n$ in $Q$, there is no red $\cX$-good copy of $\QQ(\cX)$ for any $\cX \in \binom{[N]}{n}$. 
Then for an arbitrary $n$-element subset $\cX$ of $[N]$, let $\cY= [N]\setminus \cX$. Now
Theorem \ref{thm_duality} implies that there exists a blue $\cY$-shrub.  In particular, there is a blue $\cY$-shrub for any $k$-element subset $\cY$ of $[N]$. 
\end{proof}
\bigskip

Throughout the section, let $\cX$ and $\cY$ be fixed disjoint sets. 
Let $Q=\QQ(\cX\cup\cY)$ be a Boolean lattice on ground set $\cX\cup\cY$. We fix an arbitrary blue/red coloring of $Q$ with no blue copy of $\Lambda$.
We always let $n,k\in\N$ such that $|\cX|=n$, $|\cY|=k$ and let $N=n+k$. 
For $X\subseteq \cX$, $Y\subseteq\cY$, we usually denote the vertex $X\cup Y$ by $(X,Y)$.
\\

In order to characterize colorings of $Q$ which do not contain an embedding $\phi$ of $\QQ(\cX)$ into $Q$ such that for every $X\in \QQ(\cX)$, $\phi(X)$ is red and $\phi(X)\cap \cX=X$,
we introduce the following notation.

For $X\subseteq \cX$ and $Y\subseteq \cY$, we say that the vertex $(X,Y)\in Q$ is \textit{embeddable} if there is an embedding $\phi:\QQ(\cX)\cap\{X'\subseteq\cX\colon X'\supseteq X\}\to Q$ 
with a monochromatic red image, such that $\phi(X')\cap \cX=X'$ for all $X'$ and $\phi(X)\supseteq (X,Y)$. 
We say that $\phi$ \textit{witnesses} that $(X,Y)$ is embeddable.

This definition immediately implies:

\begin{observation}\label{lem2}
$(\varnothing,\varnothing)$ is not embeddable if and only if there is no embedding $\phi:\QQ(\cX)\to Q$ such that for every $X'\subseteq \cX$, $\phi(X')$ red and $\phi(X')\cap \cX=X'$.
\end{observation}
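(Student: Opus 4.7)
The plan is to note that Observation \ref{lem2} is obtained by merely specializing the definition of ``embeddable'' to the case $X=\varnothing$, $Y=\varnothing$, so the proof is essentially a direct translation.

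First I would unpack the definition for $(X,Y)=(\varnothing,\varnothing)$. The restricted domain becomes
\[
\QQ(\cX)\cap\{X'\subseteq \cX\colon X'\supseteq \varnothing\} = \QQ(\cX),
\]
so the hypothetical witness is an embedding $\phi\colon \QQ(\cX)\to Q$ whose image is monochromatic red and which satisfies $\phi(X')\cap\cX=X'$ for every $X'\subseteq\cX$. The side condition $\phi(X)\supseteq (X,Y)$ reduces to $\phi(\varnothing)\supseteq \varnothing$, which is vacuous. Thus the embeddability of $(\varnothing,\varnothing)$ is, by definition, the existence of such a $\phi$.

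Next I would handle the two directions separately, although both are trivial. For the forward direction, if $(\varnothing,\varnothing)$ is embeddable, then a witnessing $\phi$ is precisely an embedding of $\QQ(\cX)$ into $Q$ with red monochromatic image satisfying $\phi(X')\cap\cX=X'$ for all $X'$. For the converse direction, any such embedding $\phi$ automatically satisfies $\phi(\varnothing)\supseteq\varnothing$ and so serves as a witness that $(\varnothing,\varnothing)$ is embeddable.

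There is no real obstacle here: the whole statement is a book-keeping remark highlighting the base case $(X,Y)=(\varnothing,\varnothing)$ which corresponds exactly to the existence of a red $\cX$-good copy of $\QQ(\cX)$ in $Q$. The point of isolating it as an observation is to make clear that ruling out such a red copy is equivalent to proving non-embeddability of the single vertex $(\varnothing,\varnothing)$, which will be the downward-induction target in the Duality Theorem.
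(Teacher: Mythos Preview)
Your proposal is correct and matches the paper's approach: the paper simply states that the observation follows immediately from the definition, and your unpacking of the case $(X,Y)=(\varnothing,\varnothing)$ is exactly the intended one-line justification.
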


The key ingredient for the proof of the Duality Theorem, Theorem \ref{thm_duality}, is the following characterization of embeddable vertices.

\begin{lemma}\label{lem3}
Let $X\subseteq\cX$, $Y\subseteq \cY$. Let $Q= \QQ (\cX\cup \cY)$ be a blue/red colored Boolean lattice with no blue copy of $\Lambda$. Then $(X,Y)$ is embeddable if and only if either
\begin{enumerate}[label=(\roman*)]
\item $(X,Y)$ is blue and there is a $Y'\subseteq\cY$ with $Y'\supsetneq Y$ such that $(X,Y')$ is embeddable, or
\item $(X,Y)$ is red and for all $X'\subseteq\cX$ with $X'\supsetneq X$, \  $(X',Y)$ is embeddable.
\end{enumerate}
\end{lemma}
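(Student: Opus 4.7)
The forward direction is routine and does not rely on the no-blue-$\Lambda$ hypothesis. Suppose $\phi:[X,\cX]\to Q$ witnesses $(X,Y)$ embeddable, so $\phi(X)=(X,Z)$ with $Z\supseteq Y$ and $(X,Z)$ red. If $(X,Y)$ is blue then $Z\supsetneq Y$, and the same $\phi$ witnesses $(X,Z)$ embeddable, giving (i) with $Y'=Z$. If $(X,Y)$ is red, then for any $X'\supsetneq X$ the restriction of $\phi$ to $[X',\cX]$ satisfies $\phi(X')\cap\cX=X'$ and $\phi(X')\supseteq\phi(X)\supseteq(X,Y)$, so $\phi(X')\supseteq(X',Y)$, giving (ii). The reverse direction of (i) is equally immediate: a witness $\phi'$ for $(X,Y')$ with $Y'\supsetneq Y$ satisfies $\phi'(X)\supseteq(X,Y')\supseteq(X,Y)$, so $\phi'$ also witnesses $(X,Y)$ embeddable.

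The reverse direction of (ii) is the crux. I would argue by strong induction on $|\cX\setminus X|$, the base $X=\cX$ being immediate with $\phi(\cX)=(\cX,Y)$. For the inductive step, pick any $a\in\cX\setminus X$, set $\hat X=X\cup\{a\}$, and let $\hat\phi$ witness $(\hat X,Y)$ embeddable; this supplies $\phi$ on the sub-Boolean lattice $[\hat X,\cX]$. It remains to extend $\phi$ to the complementary sub-Boolean lattice $[X,\cX\setminus\{a\}]$, with $\phi(X)=(X,Y)$, so that $\phi$ is red, $\cX$-good, monotone, and compatible with $\hat\phi$ in the sense that $\phi(X')\subseteq\hat\phi(X'\cup\{a\})$ for every $X'$ in this complementary sublattice (this compatibility, together with monotonicity on each half, forces monotonicity of $\phi$ on all of $[X,\cX]$). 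I would phrase the extension as an application of the same lemma inside the sub-Boolean lattice $\QQ((\cX\setminus\{a\})\cup\cY)$, which inherits the no-blue-$\Lambda$ property, with additional $\cY$-range constraints coming from $\hat\phi$.

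The principal obstacle is guaranteeing that the required red vertex is available at every stage of the inner construction, and this is exactly where no-blue-$\Lambda$ enters. If at some $X'$ every candidate $(X',Z)$ with $M\subseteq Z\subseteq U$ were blue, where $M$ is the union of previously chosen $\cY$-parts and $U=\hat\phi(X'\cup\{a\})\cap\cY$, then whenever $|U\setminus M|\geq 2$ one reads off a blue $\Lambda$ directly from the three vertices $(X',M\cup\{y_1\})$, $(X',M\cup\{y_2\})$, $(X',M\cup\{y_1,y_2\})$, contradicting the hypothesis; the remaining small-interval edge cases ($|U\setminus M|\leq 1$) are resolved by combining $\hat\phi$ with a witness for $(X',Y)$ embeddable to locate a red vertex inside the bound. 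Stitching the two halves then yields the desired embedding $\phi:[X,\cX]\to Q$, closing the induction.
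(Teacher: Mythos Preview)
Your treatment of the forward direction and of the reverse implication in case (i) is correct and matches the paper. The difficulty is entirely in the reverse implication for (ii), and there your outline has a genuine gap.

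The problem is the edge case $|U\setminus M|\le 1$. Your plan fixes an arbitrary $a$ and an arbitrary witness $\hat\phi$ for $(\hat X,Y)$ and then tries to extend greedily on $[X,\cX\setminus\{a\}]$ subject to the compatibility bound $\phi(X')\cap\cY\subseteq U=\hat\phi(X'\cup\{a\})\cap\cY$. But nothing prevents $U$ from being too small. Take $\cX=\{a,b\}$, $\cY=\{1,2\}$, $X=Y=\varnothing$, and colour only $(\{b\},\varnothing)$ blue (all other vertices red). There is no blue $\Lambda$, and condition (ii) holds at $(\varnothing,\varnothing)$. A perfectly legitimate witness for $(\{a\},\varnothing)$ is $\hat\phi(\{a\})=(\{a\},\varnothing)$, $\hat\phi(\{a,b\})=(\{a,b\},\varnothing)$. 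Then at $X'=\{b\}$ you have $M=U=\varnothing$, so the only candidate is $(\{b\},\varnothing)$, which is blue. Your proposed rescue, ``combining $\hat\phi$ with a witness for $(X',Y)$ embeddable to locate a red vertex inside the bound,'' does not help: every witness for $(\{b\},\varnothing)$ embeddable sends $\{b\}$ to $(\{b\},W)$ with $W\neq\varnothing$, hence outside the bound $U=\varnothing$. One can sometimes escape by choosing a different $a$ or a different $\hat\phi$, but your argument fixes both at the outset and gives no mechanism for revising them; in larger examples the obstructions can accumulate. The phrase ``an application of the same lemma \ldots\ with additional $\cY$-range constraints'' names the right strengthening but does not prove it, and it is precisely this constrained version that fails for a generic choice of $\hat\phi$.

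The paper avoids the gluing problem altogether by a global, non-inductive construction: for each $X'\supseteq X$ it looks at the minimal $X^*\in[X,X']$ with $(X^*,Y)$ blue and defines $\phi(X')$ according to whether there are none, exactly one, or at least two such minima; in the last case it sets $\phi(X')=(X',\cY)$, and the no-blue-$\Lambda$ hypothesis is used once, to show that $(X',\cY)$ must be red because two incomparable blue vertices $(X_1,Y),(X_2,Y)$ sit strictly below it. This sidesteps any upper-bound constraint on the $\cY$-part and makes the compatibility checks straightforward.
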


Note that if $X\subseteq\cX$ and $(X,\cY)$ is blue, then $(X,\cY)$ is not embeddable.

\begin{proof}

First suppose that $(X,Y)$ is embeddable. Let $\phi$ be an embedding of $\QQ(\cX)\cap\{X'\subseteq\cX\colon X'\supseteq X\}$ into $Q$ witnessing that $(X,Y)$ is embeddable. 

If $(X,Y)$ is blue, then $\phi(X)\supsetneq(X,Y)$ because $\phi$ has a monochromatic red image. 
Thus there exists $Y'\subseteq\cY$ with $Y'\supsetneq Y$ such that $\phi(X)=(X,Y')$.
But then $\phi$ also witnesses that $(X,Y')$ is embeddable, so Condition (i) is fulfilled.

If $(X,Y)$ is red, pick some arbitrary $X^*\subseteq\cX$ such that $X^*\supsetneq X$. 
Then the function $\phi^*:\QQ(\cX)\cap\{X'\subseteq\cX\colon X'\supseteq X^*\}\to Q$, $\phi^*(X')=\phi(X')$ is a restriction of $\phi$ 
and therefore an embedding with a monochromatic red image such that $\phi^*(X')\cap \cX=X'$ for all $X'$ and $\phi^*(X^*)\supseteq (X^*,Y)$. 
Thus for every $X^*\subseteq\cX$ with $X^*\supsetneq X$, the vertex $(X^*,Y)$ is embeddable, i.e., Condition (ii) is fulfilled.\\

Now, suppose that Condition (i) or Condition (ii) hold. 
If (i) holds, then $(X,Y)$ is blue and there is some $Y'\supsetneq Y$ such that $(X,Y')$ is embeddable. Then the embedding witnessing that also verifies that $(X,Y)$ is embeddable.\\

For the rest of the proof we assume that (ii) holds, i.e., that $(X,Y)$ is red and for any $X'\subseteq\cX$ with $X'\supsetneq X$ the vertex $(X',Y)$ is embeddable.
We define the required embedding $\phi: \QQ(\cX)\cap\{X'\subseteq\cX\colon X'\supseteq X\}\to Q$ for every $X'$ with $X\subseteq X'\subseteq\cX$
depending on the number of minimal $X^*$'s, $X\subseteq X^*\subseteq X'$ such that $(X^*,Y)$ is blue as follows. Let $X'$ with $X\subseteq X'\subseteq\cX$ be arbitrary.
\begin{enumerate}[label=(\arabic*)]
\item If for all $X^*$ with $X\subseteq X^*\subseteq X'$, the vertex $(X^*,Y)$ is red, let $\phi(X')=(X',Y)$. Note that this case includes $X'=X$.
\item If there is a unique minimal $X^*$ such that $X\subseteq X^*\subseteq X'$ and $(X^*,Y)$ is blue, then $(X^*,Y)$ is embeddable by Condition (ii).
Let $\phi_{X^*}$ be an embedding witnessing that. Then set $\phi(X')=\phi_{X^*}(X')$.
\item Otherwise, let $\phi(X')=(X',\cY)$. 
\end{enumerate}

Cases (1)-(3) determine a partition of 
the set $\{X'\subseteq\cX\colon X'\supseteq X\}$ into three pairwise disjoint parts.
Let $\cM_j$, $j\in[3]$, be the set of  those vertices $X'$ for which $\phi$ was assigned in Case (j). Note that $\cM_1\cup\cM_2\cup\cM_3=\{X'\subseteq\cX\colon X'\supseteq X\}$.
\\

\noindent \textbf{Claim.} ~~ The function $\phi$ witnesses that $(X,Y)$ is embeddable.
\begin{itemize}
\item Clearly, for every $X'\subseteq\cX$ with $X'\supseteq X$, $\phi(X')\cap \cX=X'$.
\item $(X,Y)$ is red, so $X\in\cM_1$. Thus $\phi(X)=(X,Y)$.
\item The argument verifying that $\phi(X')$ is red for every $X'\subseteq\cX$ with $X'\supseteq X$ depends on $i$ such that $X'\in\cM_i$.
If $X'\in\cM_1$, it is immediate that $\phi(X')$ is red. If $X'\in\cM_2$, $\phi_{X^*}$ has a monochromatic red image, thus $\phi(X')=\phi_{X^*}(X')$ is also red.
Now consider the case that $X'\in\cM_3$, i.e.\ there are $X_1,X_2\subseteq \cX$ with $X_1\neq X_2$ and $X\subseteq X_i\subseteq X'$, $i\in[2]$, 
such that $(X_i,Y)$ are blue and $X_i$ are both minimal with this property. The latter condition implies that $X_1$ and $X_2$ are incomparable,
in particular $(X_1,Y)$ and $(X_2,Y)$ are incomparable as well.
Moreover, observe that $X_i\neq X'$, $i\in[2]$, because $X'$ is by definition comparable to both $X_1$ and $X_2$. 
Now assume for a contradiction that $\phi(X')=(X',\cY)$ is blue. Recall that $(X_1,Y)$ and $(X_2,Y)$ are blue. 
We know that $X_i\subsetneql X'$ and $Y\subseteq\cY$, thus $(X_i,Y)\subsetneql (X',\cY)$.
As a consequence, $(X_1,Y)$, $(X_2,Y)$ and $(X',\cY)$ induce a blue copy of $\Lambda$ in $Q$, which is a contradiction. Thus $\phi$ has a monochromatic red image.
\item It remains to show that $\phi$ is an embedding. Let $X_1,X_2\subseteq \cX$ be arbitrary with $X\subseteq X_i\subseteq X'$, $i\in[2]$.
We shall show that $X_1\subseteq X_2$ if and only if $\phi(X_1)\subseteq \phi(X_2)$.
One direction is easy to prove: If $\phi(X_1)\subseteq \phi(X_2)$, then $X_1=\phi(X_1)\cap \cX\subseteq \phi(X_2)\cap \cX=X_2$.
Now suppose that $X_1\subseteq X_2$.
Let $Y_1,Y_2\subseteq\cY$ such that $\phi(X_1)=(X_1,Y_1)$ and $\phi(X_2)=(X_2,Y_2)$. Then we shall show that $Y_1\subseteq Y_2$.
Note that $Y\subseteq Y_i \subseteq \cY$ for $i\in[2]$.\\

Assume that at least one of $X_1$ or $X_2$ is in $\cM_1\cup\cM_3$.
If $X_1\in\cM_1$, then $Y_1=Y$ and we are done as $Y\subseteq Y_2$. 
Furthermore, if $X_2\in\cM_3$, then $Y_2=\cY$ and we are done as well since $Y_1\subseteq\cY$.
If $X_1\in\cM_3$, then $X_1\subseteq X_2$ implies that $X_2$ is also in $\cM_3$.
Conversely, if $X_2\in\cM_1$, the fact that $X_2\supseteq X_1$ yields that $X_1\in\cM_1$ and we are done as before.
\\

As a final step, suppose that $X_1,X_2\in\cM_2$. 
This implies that for each $i\in[2]$, there is a unique minimal vertex $X^*_i$ such that $X\subseteq X^*_i\subseteq X_i$ and $(X^*_i,Y)$ is blue.
Applying the initial assumption, $X^*_1\subseteq X_1\subseteq X_2$. By minimality of $X^*_2$, we obtain that $X^*_2\subseteq X^*_1$.
Now this provides that $X^*_2\subseteq X^*_1\subseteq X_1$. Using the minimality of $X^*_1$, we see that $X^*_1\subseteq X^*_2$, so $X^*_1=X^*_2$.
\\

Recall that $(X^*_1,Y)=(X^*_2,Y)$ is embeddable since $X_1,X_2\in\cM_2$. Consider the function $\phi_{X^*_1}=\phi_{X^*_2}$ witnessing that.
Because $\phi_{X^*_1}$ is an embedding and $X^*_1\subseteq X_1\subseteq X_2$, we obtain $\phi_{X^*_1}(X_1)\subseteq\phi_{X^*_1}(X_2)$.
Combining the given conditions, $$\phi(X_1)=\phi_{X^*_1}(X_1)\subseteq\phi_{X^*_1}(X_2)=\phi_{X^*_2}(X_2)=\phi(X_2),$$
that implies that  $Y_1\subseteq Y_2$.
\end{itemize}
This concludes the proof of the Claim and the Lemma.
\end{proof}

\begin{corollary}\label{cor3}
Let $X\subseteq \cX$ and $S\in\cO(\cY)$ such that $(X,\underline{S})$ is not embeddable. 
Then there exists some $X'\subseteq \cX$, $X'\supseteq X$, such that $(X',\underline{S})$ is blue and not embeddable.
\end{corollary}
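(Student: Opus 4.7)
The plan is to prove Corollary \ref{cor3} by descending induction on $|X|$ (equivalently, induction on $|\cX \setminus X|$), with $\underline{S}$ held fixed throughout. The key observation driving the argument is that Lemma \ref{lem3} leaves very little room for a non-embeddable red vertex to exist at the ``top'' of $\cX$: if $X = \cX$, then $\{X' \subseteq \cX \colon X' \supsetneq X\}$ is empty, so condition (ii) of Lemma \ref{lem3} is satisfied vacuously whenever $(\cX, \underline{S})$ is red. Hence if $(\cX, \underline{S})$ is not embeddable, it must be blue, and $X' = \cX$ witnesses the conclusion. This disposes of the base case $X = \cX$.

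For the inductive step, fix $X \subsetneq \cX$ with $(X, \underline{S})$ not embeddable, and assume the statement is known for every strict superset of $X$ in $\cX$. If $(X, \underline{S})$ is itself blue, take $X' = X$ and we are done. Otherwise $(X, \underline{S})$ is red; then neither of conditions (i) and (ii) of Lemma \ref{lem3} can hold, and since (i) would require $(X, \underline{S})$ to be blue, it is the failure of (ii) that bites: there exists some $X'' \supsetneq X$ with $(X'', \underline{S})$ not embeddable. Applying the inductive hypothesis to $X''$ supplies $X' \supseteq X'' \supsetneq X$ with $(X', \underline{S})$ blue and not embeddable, which is also a valid witness for the original $X$.

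I do not expect a serious obstacle here: the corollary is essentially an immediate bookkeeping consequence of Lemma \ref{lem3}, once one notices the vacuous truth of condition (ii) at $X = \cX$. The ordered structure of $S$ plays no role whatsoever in the argument — only the underlying set $\underline{S}$ is used — which is why the statement can be phrased for $S \in \cO(\cY)$ without any further hypothesis on the ordering. The only care needed is to check that the chain of strict supersets produced by iterating Lemma \ref{lem3} must terminate in a blue vertex rather than escaping to $\cX$ red, and that is exactly what the base case ensures.
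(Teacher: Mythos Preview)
Your proof is correct and follows essentially the same approach as the paper: both apply Lemma \ref{lem3} to a red non-embeddable vertex to produce a strictly larger non-embeddable vertex, then iterate (the paper phrases this as ``repeating this argument'', you phrase it as descending induction on $|X|$). Your explicit treatment of the base case $X=\cX$ --- observing that condition (ii) is vacuously satisfied there, so a non-embeddable $(\cX,\underline{S})$ must be blue --- is precisely the termination argument the paper leaves implicit.
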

\begin{proof}
If $(X,\underline{S})$ is blue, we are done. Otherwise Lemma \ref{lem3} yields an $X_1\subseteq \cX$, $X_1\supsetneq X$ such that $(X_1,\underline{S})$ is not embeddable.
By repeating this argument, we find an $X'\subseteq \cX$, $X'\supseteq X$, with $(X',\underline{S})$ is blue and not embeddable.
\end{proof}

Next we show a connection between embeddable vertices and the existence of a weak $\cY$-shrub.
Recall that a weak $\cY$-shrub is a subposet $\PP$ of $Q$ such that there is a function $\tau: \cO(\cY) \to Q$ with image $\PP$ 
such that for every $S\in\cO(\cY)$, \  $\tau(S)\cap \cY=\underline{S}$, and for every $S,T\in \cO(\cY)$ with $S<_\cO T$, \ $\tau(S)\subsetneql \tau(T)$. 

\begin{lemma}\label{lem4}
If $(\varnothing,\varnothing)$ is not embeddable, then there is a monochromatic blue weak $\cY$-shrub.
\end{lemma}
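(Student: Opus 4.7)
The plan is to build the map $\tau\colon \cO(\cY)\to Q$ describing the weak $\cY$-shrub by recursion along the prefix order of $\cO(\cY)$, maintaining the invariant that every image $\tau(S)=(X_S,\underline{S})$ is blue and not embeddable. The recursion is driven entirely by Lemma \ref{lem3} and Corollary \ref{cor3}, which is why the preliminaries were set up the way they were.

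For the base case $S=\varnothing$, we use the hypothesis that $(\varnothing,\varnothing)$ is not embeddable: Corollary \ref{cor3} (applied with $X=\varnothing$ and $S=\varnothing$) yields some $X_\varnothing\subseteq\cX$ with $(X_\varnothing,\varnothing)$ blue and not embeddable, and we set $\tau(\varnothing)=(X_\varnothing,\varnothing)$. For the inductive step, let $S\in\cO(\cY)$ with $|S|\ge 1$ and let $T$ be its strict prefix of size $|S|-1$, so that $\underline{S}=\underline{T}\cup\{y\}$ for a single element $y\in\cY\setminus\underline{T}$. By induction, $\tau(T)=(X_T,\underline{T})$ is blue and not embeddable. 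Since $(X_T,\underline{T})$ is blue, Condition (i) of Lemma \ref{lem3} must fail, so no $Y'\supsetneq\underline{T}$ has $(X_T,Y')$ embeddable; choosing $Y'=\underline{S}$ shows that $(X_T,\underline{S})$ is not embeddable. Apply Corollary \ref{cor3} to $(X_T,\underline{S})$ to obtain $X_S\supseteq X_T$ with $(X_S,\underline{S})$ blue and not embeddable, and set $\tau(S)=(X_S,\underline{S})$.

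It remains to verify that $\tau$ is a monochromatic blue weak $\cY$-shrub. Monochromaticity is immediate since every $\tau(S)$ was chosen blue. The identity $\tau(S)\cap\cY=\underline{S}$ holds because $X_S\subseteq\cX$ is disjoint from $\cY$. For the strict containment $\tau(S)\subsetneql\tau(T)$ whenever $S<_\cO T$, it is enough to check the case where $T$ is an immediate successor of $S$: by construction $X_S\subseteq X_T$ and $\underline{S}\subsetneql\underline{T}$, so $(X_S,\underline{S})\subsetneql(X_T,\underline{T})$; transitivity handles longer prefix chains. The only nontrivial step in the whole argument is the extension from $T$ to $S$, and it relies on exactly two ingredients working in tandem: the failure of Lemma \ref{lem3}(i) forces $(X_T,\underline{S})$ to still be non-embeddable, and Corollary \ref{cor3} then upgrades this to a blue non-embeddable vertex whose $\cX$-part contains $X_T$. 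This is the heart of the construction and is precisely what the preliminary characterization of embeddable vertices was designed to deliver.
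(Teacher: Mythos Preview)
Your proof is correct and follows essentially the same approach as the paper: both construct $\tau$ recursively along the prefix order, using Corollary~\ref{cor3} for the base case, the failure of condition~(i) in Lemma~\ref{lem3} to propagate non-embeddability from $(X_T,\underline{T})$ to $(X_T,\underline{S})$, and then Corollary~\ref{cor3} again to obtain a blue non-embeddable $(X_S,\underline{S})$ with $X_S\supseteq X_T$. The only cosmetic difference is that the paper records the invariant $X_T\subseteq X_S$ for \emph{all} prefixes $T\le_\cO S$ explicitly, whereas you verify it for immediate successors and invoke transitivity; both are equally valid.
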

\begin{proof}
We construct $\tau:\cO(\cY)\to Q$ iteratively and increasingly with respect to the order of $\cO(\cY)$. 
Suppose that $(\varnothing,\varnothing)$ is not embeddable. 
By Corollary \ref{cor3} there is some $X_\varnothing\subseteq \cX$ such that $(X_\varnothing,\varnothing)$ is blue and not embeddable.
Let $\tau(\varnothing)=(X_\varnothing,\varnothing)$. 
From here, we continue iteratively. Suppose that for $S\in\cO(\cY)$, $\underline{S}\neq\cY$, we have defined $X_S\subseteq \cX$ such that
\begin{enumerate}[label=(\arabic*)]
\item  $X_S\supseteq X_T$ for every $T\le_\cO S$ and 
\item $\tau(S)=(X_S,\underline{S})$ is blue and not embeddable.
\end{enumerate}
Consider an arbitrary $S'\in\cO(\cY)$ such that $S<_\cO S'$ and $|S'|=|S|+1$. 
By Lemma \ref{lem3} applied for $X_S$ and $\underline{S}$, we obtain that $(X_S,\underline{S'})$ is not embeddable. 
Then Corollary \ref{cor3} yields that there is some $X_{S'}\subseteq\cX$, $X_{S'}\supseteq X_S$, such that $(X_{S'},\underline{S'})$ is blue and not embeddable.
Observe that for $T\in\cO(\cY)$ with $T\le_\cO S'$, either $T=S'$ and so $X_T=X_{S'}$, or $T\le_\cO S$ and so by $(1)$ $X_T\subseteq X_S\subseteq X_{S'}$.
Let $\tau(S')=(X_{S'},\underline{S'})$.
\\

Using this procedure, we define $\tau$ for all $S\in\cO(\cY)$. Let $\PP$ be the subposet of $Q$ induced by the image of $\tau$. 
We shall show that $\PP$ is a weak $\cY$-shrub witnessed by the function $\tau$. 
By (2), for every $S\in \cO(\cY)$, \ $\tau(S)$ is blue and $\tau(S)\cap \cY=\underline{S}$.

Let $S,T\in\cO(\cY)$ with $S<_\cO T$. Let $X_S,X_T\subseteq\cX$ such that $\tau(S)=(X_S,\underline{S})$ and $\tau(T)=(X_T,\underline{T})$.
Clearly, $\underline{S}\subsetneql \underline{T}$. Moreover,  item $(1)$ implies that $X_S\subseteq X_T$.
Consequently, $\tau(S)\subsetneql\tau(T)$. 

\end{proof}

Combining the previously presented Lemmas, we can now prove the Duality Theorem. 

\begin{proof}[Proof of Theorem \ref{thm_duality}]
Let $\cX$ and $\cY$ be disjoint sets.
Let $Q=Q(\cX\cup \cY)$ be a blue/red colored Boolean lattice which contains no blue copy of $\Lambda$. \\
First suppose that there is no red $\cX$-good copy of $\QQ(\cX)$.
By Observation \ref{lem2}, $(\varnothing,\varnothing)$ is not embeddable and Lemma \ref{lem4} provides that there is a blue weak $\cY$-shrub in $Q$.
Using Proposition \ref{obs_shrub} we obtain a blue $\cY$-shrub in $Q$.
This shows that there is either a red $\cX$-good copy of $\QQ(\cX)$ or a blue $\cY$-shrub. \\

Next we show that both events could not happen simultaneously. 
Let $n=|\cX|$, $k=|\cY|$ and $N=n+k$.
Assume that there exist both an $\cX$-good embedding $\phi\colon \QQ(\cX) \to Q$ with monochromatic red image
as well as a $\cY$-good embedding $\tau\colon \cO(\cY) \to Q$ with a monochromatic blue image.
\\

We apply an iterative argument in order to find a contradiction. Let $Y_0=\varnothing$ and let $S_0=(Y_0,\le)$ be the empty ordered set.
Now let $X_1\subseteq\cX$ such that $\tau(S_0)=(X_1,\underline{S_0})$ and let $Y_1\in\cY$ such that $\phi(X_1)=(X_1,Y_1)$.
Since $\phi(X_1)$ is red but $\tau(S_0)$ is blue, we know that $\phi(X_1)\neq\tau(S_0)$ and thus $Y_1\neq \underline{S_0}=\varnothing$, so $|Y_1|\ge 1$. 
\\

Now say that we already defined $X_1,\dots,X_i$, $Y_0,\ldots,Y_i$, $S_0,\dots,S_{i-1}$ for some $i\in[k]$ such that
\begin{itemize}
\item $S_{i-1}\in\cO$ and $\underline{S_{i-1}}=Y_{i-1}$,
\item $\tau(S_{i-1})=(X_i,\underline{S_{i-1}})$,
\item $\phi(X_i)=(X_i,Y_i)$, and 
\item $Y_{i-1}\subsetneql Y_i\subseteq \cY$ and $|Y_i|\ge i$.
\end{itemize}
Fix any ordering $S_i$ of $Y_i$ such that $S_{i-1}<_\cO S_i$. Such $S_i$ exists because $\underline{S_{i-1}}=Y_{i-1}\subsetneql Y_i$. 

Then let $X_{i+1}$ be  such that $\tau(S_i)=(X_{i+1},\underline{S_i})$.
Since $S_{i-1}<_\cO S_i$ and $\tau$ is an embedding, $(X_{i},\underline{S_{i-1}})=\tau(S_{i-1})\subseteq\tau(S_i)=(X_{i+1},\underline{S_i})$, therefore $X_i\subseteq X_{i+1}$.
Note that $\phi(X_i)=(X_i,\underline{S_i})$ is colored red but $\tau(S_i)=(X_{i+1},\underline{S_i})$ is blue.
Therefore $X_{i}\neq X_{i+1}$, consequently $X_i\subsetneql X_{i+1}$ and in particular $\phi(X_i)\subsetneql\phi(X_{i+1})$ because $\phi$ is an embedding.

Next let $Y_{i+1}\subseteq\cY$ such that $\phi(X_{i+1})=(X_{i+1},Y_{i+1})$. 
Then $Y_{i+1}\supseteq Y_i$ and furthermore, because $(X_{i+1},Y_i)$ is blue but $\phi(X_{i+1})$ is red, $Y_{i+1}\neq Y_i$.
Consequently $Y_{i+1}\supsetneq Y_i$, and in particular $|Y_{i+1}|\ge|Y_i|+1\ge i+1$.
\\

Iteratively, we obtain $Y_{k+1}\subseteq \cY$ with $|Y_i|\ge k+1$, a contradiction to $|\cY|=k$.
\end{proof}

\section{Random coloring with many blue shrubs} \label{sec_random}

We shall provide a coloring that will give us a lower bound on $R(\Lambda, Q_n)$. Note that we do not provide an explicit construction but only prove the existence of such a coloring. 

\begin{theorem}\label{thm_LB}
Let $N\in\N$ be sufficiently large and $k=\frac{10}{216}\frac{N}{\ln(N)}$. Consider the Boolean lattice $Q=\QQ([N])$. 
Then for sufficiently large $N$, there exists a blue/red coloring of $Q$ which contains no blue copy of $\Lambda$ and 
such that for each $\cY\in\binom{[N]}{k}$, there is a blue $\cY$-shrub in $Q$.
\end{theorem}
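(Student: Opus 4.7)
The plan is to apply the probabilistic method: sample a random blue/red coloring of $Q = \QQ([N])$ from a distribution that is $\Lambda$-free by construction, and show that with positive probability every $\cY \in \binom{[N]}{k}$ admits a blue $\cY$-shrub. By Corollary~\ref{cor_duality}, such a coloring certifies $R(\Lambda, Q_{N-k}) > N$, which is exactly the lower bound sought in Theorem~\ref{thm_main}.

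For the $\Lambda$-freeness condition I would exploit Lemma~\ref{strucL}: the blue set must be an independent union of up-trees. Since naive vertex-wise independent random coloring destroys this structure (too many blue $\Lambda$s appear), I would sample the blue set as a random forest in $Q$. A concrete attempt: draw an independent random label $\ell(x)$ for each $x\in[N]$ from a suitable alphabet, and declare $Z$ blue exactly when the labels $(\ell(x))_{x\in Z}$, read in the natural order of $Z\subseteq[N]$, satisfy a hierarchical pattern (e.g.\ a monotone-increasing-with-bounded-jumps rule) that forces the blue downset of every blue vertex to be a chain. Such a rule yields a random blue set that is $\Lambda$-free by design, leaving only the per-$\cY$ shrub-existence event to be controlled probabilistically.

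For per-$\cY$ shrub existence, Proposition~\ref{canonical-shrub} supplies the template: it suffices to locate disjoint $A_0,\dots,A_{k-1}\subseteq[N]\setminus\cY$ of size $\log k + O(1)$, each containing a Sperner antichain of size $\geq k$, whose associated shrub vertices are all blue. Since $|[N]\setminus\cY|=N-k$ greatly exceeds $k\log k$, there are many disjoint candidate tuples $(A_0,\dots,A_{k-1})$; independence of the random labels across disjoint parts of $[N]\setminus\cY$ lets one show via Chernoff-type concentration that at least one such tuple yields an all-blue shrub with probability at least $1-\exp(-\Theta(N/\ln N))$. Union-bounding over the $\binom{N}{k}\leq \exp(k\ln(eN/k))=\exp(\Theta(k\ln\ln N))$ choices of $\cY$ requires per-$\cY$ failure probability of order $\exp(-\Theta(k\ln\ln N))$, and the constant $\tfrac{10}{216}$ in $k$ is calibrated so that this threshold is met (noting that $\tfrac{10}{216\ln 2}\approx \tfrac{1}{15}$, which is what ultimately appears in the statement of Theorem~\ref{thm_main}).

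The principal obstacle is the structural tension between $\Lambda$-freeness (which restricts the blue set to a sparse forest with a chain below each vertex) and shrub abundance (which demands the highly branching structure $\cO(\cY)$, of size $\Theta(k!)$, to embed into the blue set for every single $\cY$). The probabilistic analysis must simultaneously exploit independence across disjoint regions of $[N]$ --- so that distinct $\cY$s can locate their shrubs in distinct parts of the global blue forest --- and maintain a coherent forest structure overall. Quantifying this, in particular tracking how the random labels balance forest depth ($\leq k+1$) against width ($\geq k!$ across the branchings of $\cO(\cY)$), is the crux of the argument and the reason the tight constant $\tfrac{10}{216}$ emerges.
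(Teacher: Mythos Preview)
Your high-level plan (probabilistic method, union bound, $\Lambda$-freeness via Lemma~\ref{strucL}) matches the paper, but the role of the randomness is essentially inverted, and your version has a genuine gap.

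The paper does \emph{not} sample a random coloring and then search for blue shrubs inside it. Instead it first builds, for every $\cY$, one explicit $\cY$-shrub $\PP_\cY$ (via Proposition~\ref{canonical-shrub}) and then declares blue exactly the union $\bigcup_\cY \PP_\cY$. The only randomness is in choosing, for each $\cY$, a set $A_\cY\subseteq[N]\setminus\cY$ of size $\tfrac{3}{2}k\ln k - k$ together with a random shift $X_\cY\subseteq Z_\cY=[N]\setminus(\cY\cup A_\cY)$ (each element included with probability $\tfrac12$); the shrub is built inside $\QQ(A_\cY\cup\cY)$ and then translated by $X_\cY$. The union bound runs over \emph{pairs} $(\cY_1,\cY_2)$: Chernoff gives that with probability $1-\exp(-\Theta(N))$ one has $X_{\cY_1}\cap Z_{\cY_2}\not\subseteq X_{\cY_2}$, and this single condition forces every vertex of $\PP_{\cY_1}$ to be incomparable with every vertex of $\PP_{\cY_2}$. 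Once all $\binom{N}{k}^2$ pairs are handled, the blue set is an independent union of up-trees and Lemma~\ref{strucL} yields $\Lambda$-freeness automatically; shrub existence is not a random event at all.

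Your route --- fix a random $\Lambda$-free coloring via per-element labels $\ell(x)$, then argue each $\cY$ finds a blue shrub --- runs into the obstacle you flag but do not resolve. A $\cY$-shrub has $\Theta(k!)$ vertices, and any rule in which blueness of $Z$ is determined by the labels $(\ell(x))_{x\in Z}$ makes ``this candidate shrub is entirely blue'' an event controlled by only $|A_\cY\cup\cY|=\Theta(k\log k)=\Theta(N)$ labels; you give no construction under which this event has probability anywhere near the $1-\exp(-\Theta(k\ln\ln N))$ you need. Worse, since each candidate tuple $(A_0,\dots,A_{k-1})$ occupies $\Theta(N)$ ground-set elements, there are only $O(1)$ disjoint candidates in $[N]\setminus\cY$, so ``many independent trials plus Chernoff'' cannot manufacture an exponentially small failure probability. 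The paper sidesteps all of this by making the shrubs blue by fiat and spending the randomness solely on pairwise incomparability, which is a per-pair event with genuinely $\exp(-\Theta(N))$ failure probability --- comfortably beating the $\binom{N}{k}^2\le\exp(\tfrac{2}{\alpha}N)$ union bound, and this comparison is where the constant $\alpha=21.6$ (hence $\tfrac{10}{216}$) actually comes from.
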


\begin{proof}[Proof of Theorem \ref{thm_LB}]
Let $\alpha= 21.6$ and $\beta=0.134$.
Let $N\in\N$ and $k=\frac{1}{\alpha}\frac{N}{\ln(N)}$, let $Q=\QQ([N])$.

The idea of the proof is to construct a  $\cY$-shrub, denoted $\PP_{\cY}$,  for every $\cY\in\binom{[N]}{k}$, with an additional property so that the selected shrubs are independent. 
Since each shrub does not contain a copy of $\Lambda$, it implies that the independent union of all the $\PP_{\cY}$'s also does not contain a copy of $\Lambda$.
We obtain these shrubs by randomly choosing a \textit{$\cY$-framework} for every $\cY\in\binom{[N]}{k}$ and then constructing a $\cY$-shrub based on each of them. 
Afterwards we define a coloring where every vertex in each constructed shrub is colored blue and the remaining vertices red.
\\

\noindent A \textit{$\cY$-framework} of $\cY\in\binom{[N]}{k}$ is a $4$-tuple $(\cY,A_\cY,Z_\cY, X_\cY)$ such that
\begin{itemize}
\item $\cY$, $A_\cY$ $Z_\cY$ are pairwise disjoint and $\cY\cup A_\cY\cup Z_\cY=[N]$,
\item $|A_\cY|=\tfrac{3}{2}k\ln k-k$,
\item $X_\cY\subseteq Z_\cY$.
\end{itemize}

\noindent A $\cY$-framework is \textit{random} if
\begin{itemize}
\item $A_\cY\in\binom{[N]\backslash\cY}{\tfrac{3}{2}k\ln k-k}$ is chosen uniformly at random, and
\item each element of $Z_\cY=[N]\backslash(\cY\cup A_\cY)$ is included in $X_\cY$ independently at random with probability $\frac{1}{2}$.
\end{itemize}

Now draw a random $\cY$-framework for every $\cY\in\binom{[N]}{k}$.
Observe that by choice of $k$, $k\ln  k =\frac{N}{\alpha}\cdot \frac{\ln(N)-\ln(\alpha)-\ln\ln(N)}{\ln(N)}$, so $\frac{20N}{21\alpha} \le k\ln k\le \frac{N}{\alpha}$. 
Since  $|Z_\cY|= N-\tfrac{3}{2}k\ln k$,  we have  $(1-\frac{3}{2\alpha})N\le|Z_\cY|\le (1-\frac{10}{7\alpha})N$.
\\

\noindent \textbf{Claim 1.} ~~ W.h.p.\ for every $\cY_1,\cY_2\in\binom{[N]}{k}$ with $\cY_1\neq\cY_2$, \ $|X_{\cY_1}\cap Z_{\cY_2}|\ge \beta N$.\\
Consider some arbitrary $\cY_1,\cY_2\in\binom{[N]}{k}$, $\cY_1\neq\cY_2$. Observe that $(1-\frac{3}{\alpha})N\le|Z_{\cY_1}\cap Z_{\cY_2}|\le|Z_\cY|\le(1-\frac{10}{7\alpha})N$.
In a random $\cY$-framework, each element of $Z_{\cY_1}\cap Z_{\cY_2}$ is contained in $X_{\cY_1}\cap Z_{\cY_2}$ independently with probability $\frac12$.
Consequently, $|X_{\cY_1}\cap Z_{\cY_1}|\sim \text{Bin}(|Z_{\cY_1}\cap Z_{\cY_2}|,\frac12)$ and $\EEE(|X_{\cY_1}\cap Z_{\cY_1}|)=\frac12 |Z_{\cY_1}\cap Z_{\cY_2}|$.   
We have $1-\frac3\alpha\ge2\beta$. In addition, $\frac{\big(\frac{1}{2}-\frac{3}{2\alpha}-\beta\big)^2}{1-\frac{10}{7\alpha}}>\frac{2}{\alpha}$, thus  there exist some $\epsilon>0$ such that $\frac{\big(\frac{1}{2}-\frac{3}{2\alpha}-\beta\big)^2}{1-\frac{10}{7\alpha}}\geq \epsilon + \frac{2}{\alpha}$. 
Applying Chernoff's inequality gives
\begin{eqnarray*}
\PPP(|X_{\cY_1}\cap Z_{\cY_2}|\le \beta N)&=& \PPP\left(|X_{\cY_1}\cap Z_{\cY_2}|\le\frac{|Z_{\cY_1}\cap Z_{\cY_2}|}{2}-\left(\frac{|Z_{\cY_1}\cap Z_{\cY_2}|}{2}-\beta N\right)\right)\\
&\le & \exp\left(-\frac{\big(\frac{|Z_{\cY_1}\cap Z_{\cY_2}|}{2}-\beta N\big)^2}{|Z_{\cY_1}\cap Z_{\cY_2}|}\right)\\
&\le & \exp\left(-\frac{\big((\frac{1}{2}-\frac{3}{2\alpha})-\beta\big)^2}{(1-\frac{10}{7\alpha})}\cdot N\right)\\
&\le & \exp\left(-\left(\frac{2}{\alpha}+\epsilon\right)\cdot N\right).
\end{eqnarray*}

\noindent
Let $E_1$ be  the event that for some distinct $\cY_1,\cY_2\in\binom{[N]}{k}$,  $|X_{\cY_1}\cap Z_{\cY_2}|\le \beta N$. Then 
\begin{eqnarray*}
\PPP(E_1)&= & \binom{N}{k}\left(\binom{N}{k}-1\right) \PPP(|X_{\cY_1}\cap Z_{\cY_2}|\le \beta N)\\
&\le &  N^{2k} \exp\left(-\left(\frac{2}{\alpha}+\epsilon\right)\cdot N\right)\\
&\le & \exp\left(\frac{2N\ln (N)}{\alpha\ln  (N)} - \left(\frac{2}{\alpha}+\epsilon\right)\cdot N\right)\\
&=& \exp(-\epsilon N)\to 0, \mbox{ as }  N\to\infty.
\end{eqnarray*}
This proves Claim 1.\\

\noindent \textbf{Claim 2.} ~~ W.h.p.\ for every $\cY_1,\cY_2\in\binom{[N]}{k}$ with $\cY_1\neq\cY_2$, \ $X_{\cY_1}\cap Z_{\cY_2}\not\subseteq X_{\cY_2}$.\\
We can suppose that the collection of random frameworks fulfills the property of Claim 1.
Let $\cY_1,\cY_2\in\binom{[N]}{k}$ be such that $\cY_1\neq\cY_2$.
Note that each element of $X_{\cY_1}\cap Z_{\cY_2}$ is contained in $X_{\cY_2}$ with probability $\frac{1}{2}$. Thus,
$$\PPP(X_{\cY_1}\cap Z_{\cY_2}\subseteq X_{\cY_2})=\left(\frac{1}{2}\right)^{|X_{\cY_1}\cap Z_{\cY_2}|}\le 2^{-\beta N}.$$
Let $E_2$ be the event that  there exist $\cY_1,\cY_2\in\binom{[N]}{k}$ with $\cY_1\neq\cY_2$ such that $X_{\cY_1}\cap Z_{\cY_2}\not\subseteq X_{\cY_2}$. 
Since $\frac{2}{\alpha}<\ln (2)\beta$, we have 
$$\PPP(E_2)\le 
N^{2k} \PPP(X_{\cY_1}\cap Z_{\cY_2}\subseteq X_{\cY_2})
\le N^{2k}\cdot 2^{-\beta N}=\exp\left(\frac{2}{\alpha}N-\ln (2)\beta N\right)\to 0,  \mbox{ as } N\to\infty.$$
This proves Claim 2.\\

In particular, there exists a collection of $\cY$-frameworks $(\cY,A_\cY,Z_\cY, X_\cY)$, $\cY\in\binom{[N]}{k}$, such that 
for every $\cY_1,\cY_2\in\binom{[N]}{k}$ with $\cY_1\neq\cY_2$, \ $X_{\cY_1}\cap Z_{\cY_2}\not\subseteq X_{\cY_2}$.
\\

Note that $|A_{\cY}|=\tfrac{3}{2}k\ln k-k\ge k(\log k + \log\log k)$.
Let $\PP'_{\cY}$ be a $\cY$-shrub in $\QQ(A_{\cY}\cup \cY)$ as guaranteed by Lemma \ref{canonical-shrub}.
Note that $\PP_{\cY}$'s are not necessarily independent. 
Let $\PP_{\cY}$ be obtained from $\PP'_{\cY}$ by replacing each vertex  $W$ of $\PP'_{\cY}$ with $W\cup X_{\cY}$.
Then $\PP_{\cY}$ is a $\cY$-shrub in $Q$.  \\

\noindent \textbf{Claim 3.} ~~ Let $\cY_1, \cY_2$ be two distinct $k$-element subsets of $[N]$. Then $\PP_{\cY_1}$  and $\PP_{\cY_2}$ are  independent.\\
Consider arbitrary elements $U_i\in \PP_{\cY_i}$, $i\in[2]$. Recall that $X_{\cY_1}\cap Z_{\cY_2}\not\subseteq X_{\cY_2}$, which implies that 
there exists some $z\in (X_{\cY_1}\cap Z_{\cY_2})\backslash X_{\cY_2}$. Note that $z\in U_1$ since $X_{\cY_1}\subseteq U_1$. Moreover 
$z\not\in U_2$ since $z\in Z_{\cY_2}\setminus X_{\cY_2}$ and $(Z_{\cY_2}\setminus X_{\cY_2})\cap U_2 = \varnothing$. 
In particular, $z\in U_1\backslash U_2$. Similarly, there is an element $w\in U_2\backslash U_1$. Thus $U_1\not\sim U_2$.  \qed\\
\bigskip

We consider the following coloring $c: Q\to \{\text{blue, red}\}$. For $X\subseteq [N]$, let
\begin{equation} \nonumber
c(X) = 
 \begin{cases}
\text{blue},  		\hspace*{0.75cm}\mbox{ if } ~~  X\in \bigcup_{\cY\in\binom{[N]}{k}} \PP_\cY,\\
\text{red}, 		\hspace*{0.8cm}\mbox{ otherwise.}
\end{cases}
\end{equation}

Note that for every $\cY\in\binom{[N]}{k}$, $\PP_\cY$ witnesses that there is a blue $\cY$-shrub in $Q$.
Recall that a $\cY$-shrub is an up-tree. Applying Claim 3 the blue subposet of $Q$ is a collection of independent up-trees.
Then Lemma \ref{strucL} provides that the coloring $c$ does not contain a blue copy of $\Lambda$.
\end{proof}

\section{Proof of Theorem \ref{thm_main} and Theorem \ref{thm-MAIN}}\label{sec_proof}

\begin{proof}[Proof of Theorem \ref{thm_main}]
~\\
\textbf{ Upper Bound:~} Let $k=(1+\epsilon)\frac{n}{\log(n)}$ and consider an arbitrary blue/red colored Boolean lattice $Q$ on ground set $[n+k]$ with no blue copy of $\Lambda$.
Pick any $\cY\in\binom{[n+k]}{k}$ and assume that there is a blue $\cY$-shrub in $Q$. 
Recall that the maximal elements of the $\cY$-shrub form an antichain of size $k!$.
Sperner's theorem provides that the largest antichain in $Q$ has size $\binom{n+k}{\lfloor\frac{n+k}{2}\rfloor}$, 
so $k!\le \binom{n+k}{\lfloor\frac{n+k}{2}\rfloor}\leq  2^{n+k}$.
\\

We also have that  $k!>\left(\tfrac{k}{e}\right)^k=2^{k(\log k - \log e)}.$
By the choice of $k$, we obtain for sufficiently large $n$,
$$k\log k\ge\tfrac{(1+\epsilon)n}{\log n}\big(\log(n)-\log\log(n)\big)>\big(1+\tfrac{\epsilon}{2}\big)n.$$
In particular for sufficiently large $n$, 
$k\log k -k\log e>n+k,$
a contradiction.
Thus $Q$ does not contain a blue $\cY$-shrub for this fixed $\cY$. Then Corollary \ref{cor_duality} yields that there is a red copy of $Q_n$ in $Q$.
Consequently, each blue/red colored Boolean lattice of dimension $n+k$ contains either a blue copy of $\Lambda$ or a red copy of $Q_n$.\\

\noindent
\textbf{Lower Bound:~} Let $N$ sufficiently large, let $k=\frac{10}{216}\frac{N}{\ln(N)}$ and $n=N-k$. Note that $k\le \frac{N}{2}$, thus $n\le N\le 2n$. Let $Q=\QQ([N])$.
By Theorem \ref{thm_LB} there exists a coloring of $Q$ with no blue copy of $\Lambda$ such that for every $\cY\in\binom{[N]}{k}$, there is a blue $\cY$-shrub.
By Corollary \ref{cor_duality}, there is no red copy of $Q_n$ in this coloring, thus $R(\Lambda,Q_n)\ge N= n+ k$.
It remains to bound $k$ in terms of $n$.
Indeed, $$k=\frac{10}{216}\cdot\frac{N}{\ln(N)}\ge \frac{10}{216}\cdot\frac{n}{\ln(2n)}
= \frac{10}{216}\cdot\frac{\log(e)n}{\log(2n)}\ge \frac{1}{15}\cdot\frac{n}{\log(n)},$$
which concludes the proof.
\end{proof}
\bigskip

\begin{proof}[Proof of Theorem \ref{thm-MAIN}]
The lower bound on $R(P, Q_n)$ for $P$ containing either $\Lambda$ or $V$ follows from Theorem \ref{thm_main}. 

Consider now a poset $P$ that contains neither a copy of $\Lambda$ nor a copy of $V$.  
By Corollary \ref{strucI}, $P$ is a union of independent chains.
Assume that $P$ has $k$ independent chains on at most $\ell$ vertices each. Let $K$ be an even  integer such that $\binom{K}{K/2} \geq k$. 
Let $\cY$ be a set of size $K$ and let $\cX$ be a set, disjoint from $\cY$ of size $n+ \ell$.
Consider an arbitrary coloring of $\QQ(\cX\cup \cY)$.  
Assume that there is no red copy of $Q_n$. We shall show that there is a blue copy of $P$.\\

Let $Y_1, \ldots, Y_k$ form an antichain in $\QQ(\cY)$, its existence is guaranteed by Sperner's theorem.
Let $Q^i$ be a copy of $\QQ(\cX)$ obtained as an image of an embedding $\phi_i: \QQ(\cX) \to  \QQ(\cX\cup Y_i)$, $\phi_i(X)=X\cup Y_i$ for any $X\subseteq \cX$. 
Consider the blue vertices in $Q^i$. 
If there is no blue chain on $\ell$ vertices in $Q^i$, Corollary \ref{shift_cor} implies the existence of a red copy of $Q_n$ in $Q^i$, a contradiction. 
Thus for every $i\in[k]$, there is a blue copy $P_i$ of a chain on $\ell$ vertices in $Q^i$.  
Note that for any $A\in Q^i, B\in Q^j$, $i\neq j$, \ $A\not\sim B$, since $A\cap\cY =Y_i\not\sim Y_j=B\cap\cY$.
Thus the $P_i$'s are independent chains on $\ell$ vertices each.  Their union contains a copy of $P$.  This shows that  $R(P, Q_n) \leq n + K+\ell = n+ f(P)$. 
\end{proof} 
\medskip

\noindent \textbf{Acknowledgments:}~  The authors would like to thank Alexander Riasanovsky for comments on the manuscript. The research was partially supported by DFG grant FKZ AX 93/2-1.


\end{document}